\newcommand{\calA}{\mathcal{A}}
\newcommand{\calC}{\mathcal{C}}
\newcommand{\calD}{\mathcal{D}}
\newcommand{\calI}{\mathcal{I}}
\newcommand{\calR}{\mathcal{R}}
\newcommand{\calS}{\mathcal{S}}
\newcommand{\calT}{\mathcal{T}}
\newcommand{\calX}{\mathcal{X}}
\newcommand{\calY}{\mathcal{Y}}
\newcommand{\calZ}{\mathcal{Z}}
\newcommand{\bbA}{\mathbb{A}}
\newcommand{\bbN}{\mathbb{N}}
\newcommand{\bbZ}{\mathbb{Z}}
\let\Im=\undefined
\let\mod=\undefined
\DeclareMathOperator{\Ab}{Ab} %
\DeclareMathOperator{\tr}{tr} %
\DeclareMathOperator{\id}{id} %
\DeclareMathOperator{\Im}{Im} %
\DeclareMathOperator{\End}{End} %
\DeclareMathOperator{\ind}{ind} %
\DeclareMathOperator{\Hom}{Hom} %
\DeclareMathOperator{\mod}{mod} %
\DeclareMathOperator{\Mod}{Mod} %
\DeclareMathOperator{\rad}{rad} %
\DeclareMathOperator{\proj}{proj} %
\DeclareMathOperator{\KGdim}{KGdim} %
\DeclareMathOperator{\gldim}{gl.dim} %
\DeclareMathOperator{\umod}{\ul{mod}} %
\newcommand{\ul}{\underline}
\newcommand{\vertexD}[1]{\circ \save*+!D{\scriptstyle #1} \restore}
\newcommand{\vertexL}[1]{\circ \save*+!L{\scriptstyle #1} \restore}
\newcommand{\vertexU}[1]{\circ \save*+!U{\scriptstyle #1} \restore}
\newcounter{claim}[section]
\newtheorem{lemma}[claim]{Lemma}
\newtheorem{proposition}[claim]{Proposition}
\newtheorem{theorem}[claim]{Theorem}
\newtheorem{corollary}[claim]{Corollary}
\newtheorem*{maintheorem}{Main Theorem}
\theoremstyle{remark}
\newtheorem{remark}[claim]{Remark}
\author{Grzegorz Bobi\'nski}
\address{Faculty of Mathematics and Computer Science \\ Nicolaus
Copernicus University \\ ul.~Chopina 12/18 \\ 87-100 Toru\'n \\
Poland}
\email{gregbob@mat.umk.pl}
\author{Henning Krause}
\address{Faculty of Mathematics \\ Bielefeld University \\ D-33501
  Bielefeld \\ Germany}
\email{hkrause@mat.uni-bielefeld.de}
\thanks{The authors acknowledge the support from Collaborative
  Research Centre 701 \emph{Spectral Structures and Topological
    Methods in Mathematics}.  The first named author was also
  supported by National Science Center Grant No.\
  DEC-2011/03/B/ST1/00847.}
\title[The Krull--Gabriel dimension of discrete derived
  categories]{The Krull--Gabriel dimension of discrete derived
  categories}
\begin{document}

\begin{abstract}
  We compute the Krull--Gabriel dimension of the category of perfect
  complexes for finite dimensional algebras which are derived
  discrete.
\end{abstract}

\maketitle

\section*{Introduction}

Let $k$ be an algebraically closed field and $\Lambda$ a finite
dimensional $k$-algebra. We denote by $\mod\Lambda$ the category of
finitely presented $\Lambda$-modules and by $\proj\Lambda$ the full
subcategory of finitely generated projective $\Lambda$-modules.

The Krull--Gabriel dimension of the representation theory of $\Lambda$
is an invariant first studied by Geigle \cite{Geigle}. For this
invariant one considers the abelian category $\calC=\Ab(\mod\Lambda)$
of finitely presented functors $\mod\Lambda\to\Ab$ into the category
of abelian groups. The Krull--Gabriel dimension $\KGdim\calC$
of $\calC$ is by definition the smallest integer $n$ such that $\calC$
admits a filtration by Serre subcategories
\[0=\calC_{-1}\subseteq\calC_0 \subseteq\ldots
\subseteq\calC_n=\calC,\] where $\calC_i/\calC_{i-1}$ is the full
subcategory of all objects of finite length in $\calC/\calC_{i-1}$.

We have $\KGdim\calC=0$ if and only if $\Lambda$ is of finite
representation type by a classical result of Auslander
\cite{Auslander}, and $\KGdim\calC\neq1$ by a result of Herzog
\cite{Herzog} and Krause \cite{Kr1998}. In his thesis \cite{Geigle},
Geigle proved that $\KGdim\calC=2$, when $\Lambda$ is tame hereditary.

In this work we investigate the category of perfect complexes which is
by definition the bounded derived category $\calD^b(\proj\Lambda)$. We compute
the Krull--Gabriel dimension of the ablian category $\Ab (\calD^b
(\proj \Lambda))$,  when $\Lambda$ is derived discrete in the sense of
Vossieck \cite{Vossieck}. The main result is the following.

\begin{maintheorem} Let $\Lambda$ be a finite dimensional $k$-algebra.
\begin{enumerate}
\item If $\Lambda$ is derived discrete and piecewise hereditary, then
\[ \KGdim \Ab (\calD^b (\proj \Lambda)) = 0.\]
\item If $\Lambda$ is derived discrete and not piecewise hereditary,
then
\[ \KGdim \Ab (\calD^b (\proj \Lambda)) =
\begin{cases} 1 & \text{if $\gldim \Lambda = \infty$}, \\ 2 & \text{if
$\gldim \Lambda < \infty$}.
\end{cases}
\]
\item If $\Lambda$ is not  derived discrete, then
  \[ \KGdim \Ab (\calD^b (\proj \Lambda)) \geq 2.\]
\end{enumerate}
\end{maintheorem}

The rest of this note is devoted to proving this theorem. For an
elementary description of the Krull--Gabriel dimension, see
Proposition~\ref{pr:KG}.

\subsection*{Conventions}
By $\bbZ$, $\bbN$, and $\bbN_+$, we denote the sets of integers,
nonnegative integers, and positive integers, respectively. For $i, j
\in \bbZ$, set \[[i, j] := \{ l \in \bbZ \mid i \leq l \leq j \}.\]
Furthermore, $[i, \infty) := \{ l \in \bbZ \mid i \leq l \}$ and $(-
\infty, j] := \{ l \in \bbZ \mid l \leq j \}$.

\section{Derived discrete algebras}

Let $\Lambda$ be a finite dimensional $k$-algebra. The algebra
$\Lambda$ is called \emph{derived discrete} if for each sequence
$(h_n)_{n \in \bbZ}$ of nonnegative integers there are only finitely
many isomorphism classes of indecomposable objects $X$ in $\calD^b
(\proj \Lambda)$ such that $\dim_k H^n (X) = h_n$ for each $n \in
\bbZ$. Note that Vossieck's original definition \cite{Vossieck}
uses the category $\calD^b (\mod \Lambda)$, but he has shown that both
versions are equivalent. The one we use is more adequate in our setup.

In \cite{Vossieck}, it is shown that an algebra $\Lambda$ is derived
discrete if and only if either $\Lambda$ is piecewise hereditary of
Dynkin type or $\Lambda$ is a one-cycle gentle algebra not satisfying
the clock condition. Recall from \cite{Happel} that $\Lambda$ is
\emph{piecewise hereditary} if it is derived equivalent to a finite
dimensional hereditary algebra.  The class of one-cycle gentle algebras
not satisfying the clock condition has been further studied
in~\cite{BobinskiGeissSkowronski}. There it is shown that if $\Lambda$
is a derived discrete algebra and not piecewise hereditary of Dynkin
type, then $\Lambda$ is derived equivalent to an algebra of the form
$\Lambda (r, n, m)$, for some triple $(r, n, m) \in \Omega$.  Here,
$\Omega$ denotes the set of all triples $(r, n, m)$ of nonnegative
integers such that $1 \leq r \leq n$, and $\Lambda (r, n, m)$ is the
path algebra of the quiver
\[
\vcenter{\xymatrix{
& & & & & \vertexD{1} \ar[d]
\\
\vertexU{-m} \ar[r]^{\alpha_{-m}} & \vertexU{- m + 1} \ar[r] &
\cdots \ar[r] & \vertexU{-1} \ar[r]^{\alpha_{-1}} & \vertexL{0}
\ar[ru]^{\alpha_0} &  \vdots\ar[d]
\\
& & & & & \vertexU{n - 1} \ar[lu]^{\alpha_{n-1}}&}}
\]
bound by the relations
\[
\alpha_{n - r + 1} \alpha_{n - r}, \ldots, \alpha_{n - 1} \alpha_{n - 2}, \alpha_0 \alpha_{n - 1}.
\]

Prototypical examples to have in mind are the algebra $\Lambda(1,1,0)$
which equals the algebra $k[\varepsilon]$ of dual numbers
($\varepsilon^2=0$), and its Auslander algebra $\Lambda(1,2,0)$. Note
that $\gldim\Lambda(1,1,0)=\infty$ while $\gldim\Lambda(1,2,0)=2$.

\section{Krull--Gabriel dimension} \label{section dimension}

Let $\calC$ be an abelian category. A full subcategory
$\calC'\subseteq\calC$ is called a \emph{Serre subcategory} if it is
closed under subobjects, quotients and extensions. If
$\calC'\subseteq\calC$ is a Serre subcategory, then one defines the
\emph{quotient category} $\calC / \calC'$ as follows. The objects of
$\calC / \calC'$ coincide with the objects of $\calC$, and if $X$ and
$Y$ are objects of $\calC$, then
\[ \Hom_{\calC / \calC'} (X, Y) := \varinjlim \Hom_{\calC} (X', Y /
Y'),
\] where $X'$ and $Y'$ run through all subobjects of $X$ and $Y$,
respectively, such that $X / X'$ and $Y'$ belong to $\calC'$.

Following Gabriel \cite{Gabriel}*{IV.1} and Geigle \cite{Geigle}*{\S2},
the \emph{Krull--Gabriel dimension} $\KGdim \calC$ of $\calC$ is
defined as follows. Let $\calC_{-1} := 0$, and for each $n \in \bbN$
denote by $\calC_n$ the full subcategory of all objects $X$ in $\calC$
which are of finite length, when viewed as objects of $\calC /
\calC_{n - 1}$. Then $\KGdim \calC$ equals the smallest $n$ such that
$\calC_n = \calC$ (and $\infty$ when such $n$ does not exist).

Let $\calT$ be a triangulated category.  Following Freyd
\cite{Fr1966}*{\S3} and Verdier \cite{Ve1996}*{II.3}, we consider the
\emph{abelianisation} $\Ab(\calT)$ of $\calT$ which is the abelian
category of finitely presented functors $F\colon\calT\to\Ab$ into the
category $\Ab$ of abelian groups. Recall that a functor
$F\colon\calT\to\Ab$ is \emph{finitely presented} (\emph{finitely
  generated}, respectively) if there exists an exact sequence of the
form $H_Y \to H_X \to F \to 0$ ($H_X \to F \to 0$, respectively).
Here, for an object $X$ in $\calT$, we denote by $H_X$ the
representable functor $\Hom (X, -) \colon \calT \to \Ab$. Similarly,
if $f \colon X \to Y$ is a morphism in $\calT$, then we denote by
$H_f$ the induced morphism $H_Y \to H_X$.  The cohomological functor
$\iota \colon \calT \to \Ab (\calT)$ sending $X \in \calT$ to $H_X$ is
universal in the following sense. If $\varphi \colon \calT \to \calA$
is a contravariant cohomological functor, then there exists a unique
exact functor $\varphi' \colon \Ab (\calT) \to \calA$, such that
$\varphi = \varphi' \circ \iota$.

Now let $\Lambda$ be any ring. We wish to compute the Krull--Gabriel
dimension of $\Ab (\calD^b (\proj \Lambda))$ and begin with an
elementary observation. To this end fix a modular lattice $L$. Denote
by $L'$ the quotient which is obtained by collapsing all finite length
intervals in $L$. Set $L_{-1}=L$ and $L_n=(L_{n-1})'$ for
$n\in\bbN$. The \emph{dimension} of $L$ is the smallest $n$ such that $L=0$.

\begin{lemma}[{\cite{Kr1998}*{Lemma~1.1}}]\label{le:KG}
  Let $\calC$ be an abelian category and $X$ an object. For the
  lattice $L_{\calC}(X)$ of subobjects we have
   $L_{\calC}(X)_n \cong L_{\calC/\calC_n}(X)$ for all $n\in\bbN$.\qed
\end{lemma}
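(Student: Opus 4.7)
The plan is to proceed by induction on $n$. The base case $n = -1$ is immediate: $\calC_{-1} = 0$ forces $L_\calC(X)_{-1} = L_\calC(X) = L_{\calC/\calC_{-1}}(X)$. For the inductive step I would use the hypothesis $L_\calC(X)_{n-1} \cong L_{\calC/\calC_{n-1}}(X)$ and apply one further derivation to obtain
\[ L_\calC(X)_n = (L_\calC(X)_{n-1})' \cong (L_{\calC/\calC_{n-1}}(X))'. \]
By the definition of the filtration, the finite length objects in $\calC/\calC_{n-1}$ are precisely the image of $\calC_n$, and the standard isomorphism $(\calC/\calC_{n-1})/(\calC_n/\calC_{n-1}) \cong \calC/\calC_n$ for iterated Serre quotients reduces the inductive step to the base case $n = 0$, but now applied inside the abelian category $\calC/\calC_{n-1}$.

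The heart of the argument is thus the following statement: for any abelian category $\calD$ with full subcategory $\calD_0$ of finite length objects, one has $L_\calD(X)' \cong L_{\calD/\calD_0}(X)$. Using the colimit description of $\Hom$ in a Serre quotient, every subobject of $X$ in $\calD/\calD_0$ is represented by an honest subobject $Y \hookrightarrow X$ in $\calD$, and two such $Y_1$, $Y_2$ represent the same subobject iff both $Y_1/(Y_1 \cap Y_2)$ and $Y_2/(Y_1 \cap Y_2)$ lie in $\calD_0$. By modularity of $L_\calD(X)$ this is equivalent to the interval $[Y_1 \cap Y_2, \, Y_1 + Y_2]$ having finite length. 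An analogous analysis determines the partial order: the image of $Y_1$ is below that of $Y_2$ iff $Y_1/(Y_1 \cap Y_2)$ lies in $\calD_0$.

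The main obstacle is verifying that the lattice-theoretic equivalence relation on $L_\calD(X)$ generated by collapsing every finite length interval coincides with the Serre-quotient equivalence described above. One direction is automatic, since any finite length interval becomes a single point in $L_{\calD/\calD_0}(X)$. For the converse, given $Y_1$ and $Y_2$ whose quotients by $Y_1 \cap Y_2$ are of finite length, the zig-zag $Y_1 \supseteq Y_1 \cap Y_2 \subseteq Y_2$ already exhibits them as connected by two finite length intervals, so they are identified in $L_\calD(X)'$. With this identification in place, compatibility of the order structures reduces to a short modular lattice computation, completing the proof.
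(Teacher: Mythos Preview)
The paper does not give its own proof of this lemma; it is stated with a \qed and a citation to \cite{Kr1998}*{Lemma~1.1}, so there is nothing in the paper to compare your argument against.

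That said, your proposal is a correct and standard proof. The inductive reduction to the case $n=0$ via the isomorphism $(\calC/\calC_{n-1})/(\calC_n/\calC_{n-1})\cong\calC/\calC_n$ is exactly right, and the heart of the matter is indeed the identification $L_{\calD}(X)'\cong L_{\calD/\calD_0}(X)$. The two ingredients you use are both classical: (i) the quotient functor $q\colon\calD\to\calD/\calD_0$ induces a surjection from subobjects of $X$ in $\calD$ onto subobjects of $q(X)$, with $q(Y_1)=q(Y_2)$ precisely when $(Y_1+Y_2)/(Y_1\cap Y_2)\in\calD_0$ (this is in Gabriel~\cite{Gabriel}); and (ii) for $Y_1\subseteq Y_2\subseteq X$ the interval $[Y_1,Y_2]$ in $L_{\calD}(X)$ is isomorphic to $L_{\calD}(Y_2/Y_1)$, so it has finite length as a lattice exactly when $Y_2/Y_1\in\calD_0$. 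Your zig-zag argument then shows the two equivalence relations coincide, and compatibility with the order follows. This is essentially how the result is proved in the cited reference.
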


This lemma suggests an alternative description of the Krull--Gabriel
dimension which avoids the formation of quotient categories.

\begin{proposition}\label{pr:KG}
Let $\Lambda$ be any ring.
\begin{enumerate}
\item The finitely generated subfunctors of the forgetful functor
  $\mod \Lambda\to\Ab$ form a modular lattice and its dimension equals
  $\KGdim\Ab(\mod\Lambda)$.
\item The finitely generated subfunctors of $H^0\colon\calD^b (\proj
  \Lambda)\to\Ab$ form a modular lattice and its dimension equals
  $\KGdim\Ab(\calD^b (\proj \Lambda))$.
\end{enumerate}
\end{proposition}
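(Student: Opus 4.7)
The key observation is that the forgetful functor $\mod \Lambda \to \Ab$ is represented by $\Lambda$, viewed as a left module over itself, and therefore coincides with $H_\Lambda = \Hom_{\mod \Lambda} (\Lambda, -)$; similarly the functor $H^0 \colon \calD^b (\proj \Lambda) \to \Ab$ equals $H_\Lambda$ when $\Lambda$ is regarded as a complex concentrated in degree zero. Thus both parts of the proposition make the same assertion about the lattice of finitely generated subfunctors of a representable functor $H = H_\Lambda$ inside the corresponding abelianisation $\calC$. Since both $\mod \Lambda$ and $\calD^b (\proj \Lambda)$ possess weak kernels, the category $\calC$ is abelian, and kernels of morphisms between representables are again representable; given a finitely generated subfunctor $G = \Im (H_M \to H_\Lambda)$, corresponding by Yoneda to a morphism $\lambda \colon \Lambda \to M$, one then has a presentation $H_{\coker \lambda} \to H_M \to G \to 0$ in $\calC$, so that $G$ is automatically finitely presented. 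Hence the lattice under consideration coincides with the modular subobject lattice $L_\calC (H)$ of $H$ in $\calC$.

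By Lemma~\ref{le:KG}, the dimension of $L_\calC (H)$ equals the smallest $n$ for which $L_{\calC / \calC_n} (H) = 0$, i.e.\ the smallest $n$ with $H \in \calC_n$. Since $H$ lies in $\calC$, this dimension is at most $\KGdim \calC$, and the inequality that must be proved is the reverse one: that $H \in \calC_n$ already forces $\calC_n = \calC$. As $\calC_n$ is a Serre subcategory, it suffices to show that every finitely presented functor in $\calC$ is a subquotient of a finite direct sum of copies of $H$, or of its images under autoequivalences of $\calC$ preserving $\calC_n$.

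In case~(1) this is immediate: a presentation $\Lambda^k \twoheadrightarrow M$ gives an embedding $H_M \hookrightarrow H_\Lambda^k$, and every object of $\Ab (\mod \Lambda)$ is a quotient of some $H_M$. In case~(2) the shift $[1]$ on $\calD^b (\proj \Lambda)$ induces an exact autoequivalence of $\calC$ which preserves every $\calC_n$, and $H_\Lambda \circ [k] = H_{\Lambda [-k]}$, so $H \in \calC_n$ implies $H_{\Lambda [k]} \in \calC_n$ for every $k \in \bbZ$. Every $X \in \calD^b (\proj \Lambda)$ is built from the $\Lambda [k]$ by finitely many mapping cones, and each triangle $X' \to Y' \to Z' \to X' [1]$ produces an exact sequence $H_{X' [1]} \to H_{Z'} \to H_{Y'}$ in $\calC$. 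Induction on the number of cones involved, using closure of $\calC_n$ under subobjects, quotients, and extensions, yields $H_X \in \calC_n$ for every $X$, and then $\calC_n = \calC$ follows since every finitely presented functor is a quotient of such an $H_X$. The principal obstacle is precisely this last step: transferring the hypothesis from $H^0 = H_\Lambda$ to arbitrary $H_X$ in the derived case, which requires combining the $\bbZ$-action of shifts on $\calC$ with the triangulated structure of $\calD^b (\proj \Lambda)$.
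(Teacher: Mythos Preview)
Your argument is correct and follows the same route as the paper: identify the lattice of finitely generated subfunctors with the subobject lattice $L_\calC(H_\Lambda)$, invoke Lemma~\ref{le:KG}, and then show that $H_\Lambda\in\calC_n$ forces $\calC_n=\calC$. The paper compresses this last step into the single remark that ``$\Lambda$ generates $\calD^b(\proj\Lambda)$ as a triangulated category''; your unpacking via the shift autoequivalence and induction on the number of cones is exactly what that phrase means. One cosmetic point: in the triangulated setting ``$\coker\lambda$'' should read ``cone of $\lambda$'', but since you already noted that weak kernels suffice, the intended meaning is clear.
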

\begin{proof}
  (1) The lattice of finitely generated subfunctors
  of $F=\Hom_\Lambda(\Lambda,-)$ equals the lattice of subobjects of
  $F$ in $\Ab(\mod\Lambda)$. Given a Serre subcategory $\calC\subseteq
  \Ab(\mod\Lambda)$, we have $F\in\calC$ iff $\calC=\Ab(\mod\Lambda)$.
  Now apply Lemma~\ref{le:KG}.

  (2) The lattice of finitely generated subfunctors of
  $H^0$ equals the lattice of subobjects of $H_\Lambda$ in
  $\Ab(\calD^b (\proj \Lambda))$, where $\Lambda$ is viewed as a
  complex concentrated in degree zero.  Now apply Lemma~\ref{le:KG},
  keeping in mind that $\Lambda$ generates $\calD^b (\proj \Lambda))$
  as a triangulated category.
\end{proof}

From now on suppose that $\Lambda$ is a finite dimensional $k$-algebra and set
$\calC:=\Ab (\calD^b (\proj \Lambda))$.  For the description of
$\calC_0$ one uses the well-known fact that the simple objects in
$\calC$ correspond to the Auslander--Reiten triangles in $\calD^b
(\proj \Lambda)$. Namely, if $X \xrightarrow{f} Y \to Z \to \Sigma X$
is an Auslander--Reiten triangle in $\calD^b (\proj \Lambda)$, then
$H_X / \Im H_f$ is a simple object in $\calC$, and every simple object
in $\calC$ is of this form. This follows directly from the definition
of an Auslander--Reiten triangle. Consequently, if $F \in \calC$, then
$F \in \calC_0$ if and only if \[\sum_{X \in \ind \calD^b (\proj
  \Lambda)} \dim_k F (X) < \infty,\] where $\ind \calD^b (\proj
\Lambda)$ denotes a fixed set of representatives of the indecomposable
objects in $\calD^b (\proj \Lambda)$; see also~\cite{Auslander}*{\S2}
for the above description of simple and finite length objects. This
condition immediately implies the first part of the Main Theorem.

\begin{proposition}
  Let $\Lambda$ be a derived discrete algebra which is piecewise
  hereditary. Then
\[ \KGdim \Ab (\calD^b (\proj \Lambda)) = 0.
\]
\end{proposition}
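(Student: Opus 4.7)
The plan is to invoke Proposition~\ref{pr:KG}(2), which via Lemma~\ref{le:KG} reduces $\KGdim \calC = 0$ to the single statement $H_\Lambda \in \calC_0$. By the characterization of $\calC_0$ given just before the statement, this in turn becomes the finiteness condition
\[
\sum_{X \in \ind \calD^b(\proj\Lambda)} \dim_k \Hom(X,\Lambda) < \infty.
\]

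Combining the hypotheses with Vossieck's classification of derived discrete algebras, $\Lambda$ must be piecewise hereditary of Dynkin type; fix a derived equivalence $\Phi \colon \calD^b(\proj\Lambda) \xrightarrow{\sim} \calD^b(\mod H)$ for a hereditary $k$-algebra $H$ of Dynkin type, and set $T := \Phi(\Lambda)$. Decomposing $T \cong T_1 \oplus \cdots \oplus T_r$ into indecomposables, and using the bijection on indecomposables induced by $\Phi$, the displayed sum becomes
\[
\sum_{Y \in \ind \calD^b(\mod H)} \sum_{i=1}^r \dim_k \Hom_{\calD^b(\mod H)}(Y,T_i).
\]

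Two features of $H$ now finish the argument. First, $|\ind(\mod H)| < \infty$ by the Dynkin hypothesis. Second, since $H$ is hereditary every indecomposable of $\calD^b(\mod H)$ is a shift of an indecomposable module, so we may write $Y \cong \Sigma^{a} M$ and $T_i \cong \Sigma^{b_i} N_i$ with $M, N_i \in \ind(\mod H)$, and then $\Hom_{\calD^b(\mod H)}(\Sigma^{a} M, \Sigma^{b_i} N_i) = \operatorname{Ext}^{b_i - a}_H(M, N_i)$ vanishes unless $b_i - a \in \{0, 1\}$. Hence each $T_i$ receives nonzero contributions from at most $2 \, |\ind(\mod H)|$ indecomposables $Y$, each via a finite-dimensional Hom space; summing over the finitely many $i$ yields a finite total.

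The argument poses no real obstacle; it is essentially bookkeeping once one has the derived equivalence with a representation-finite hereditary algebra together with the vanishing of $\operatorname{Ext}^{\geq 2}$ over such an $H$. The only mildly delicate point is that the indecomposable summands of $\Phi(\Lambda)$ are a priori arbitrary bounded complexes, but over a hereditary algebra every indecomposable of $\calD^b(\mod H)$ is (up to isomorphism) a shift of an indecomposable module, which is precisely what makes the two-value restriction $b_i - a \in \{0, 1\}$ applicable.
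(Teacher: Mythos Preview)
Your proof is correct and follows essentially the same route as the paper's: reduce to Dynkin type via Vossieck, then use that the indecomposables in $\calD^b(\mod H)$ are shifts of the finitely many indecomposable $H$-modules together with $\operatorname{Ext}^{\geq 2}_H = 0$ to obtain the finiteness condition characterising $\calC_0$. One small slip: under the paper's convention $H_\Lambda = \Hom(\Lambda,-)$, the relevant sum is $\sum_X \dim_k \Hom(\Lambda,X)$ rather than $\sum_X \dim_k \Hom(X,\Lambda)$, though your bounding argument applies verbatim in either direction; the paper is also marginally more direct in that it proves $\sum_X \dim_k H_M(X)<\infty$ for \emph{every} $M$ (hence every $F\in\calC$) rather than first reducing to $M=\Lambda$ via Proposition~\ref{pr:KG}, but this difference is cosmetic.
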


\begin{proof} If $\Lambda$ is a derived discrete algebra, which is
piecewise hereditary, then $\Lambda$ is piecewise hereditary of Dynkin
type. The well-known description of $\calD^b (\proj \Lambda)$ in this
case (see for example~\cite{Happel}), immediately implies that
\[\sum_{X \in \ind \calD^b (\proj \Lambda)} \dim_k H_M (X) < \infty\]
for each complex $M$ in $\calD^b (\proj \Lambda)$. Consequently,
\[\sum_{X \in \ind \calD^b (\proj \Lambda)} \dim_k F (X) < \infty\] for
each $F \in \Ab (\calD^b (\proj \Lambda))$, hence the claim follows.
\end{proof}

\section{Krull--Gabriel dimension and generic objects}
\label{section generic}

Let $\Lambda$ be a finite dimensional $k$-algebra. Generic modules
were introduced by Crawley-Boevey in order to describe the
representation type of an algebra \cites{CB1991,CB1992}. Following
\cite{GeKr2002}, an indecomposable object $X$ of the unbounded derived
category $\calD (\Mod \Lambda)$ of all $\Lambda$-modules is called
\emph{generic}, if $H^i (X)$ is a finite length $\End (X)$-module, for
each $i \in \bbZ$, but $X$ is not in $\calD^b(\mod \Lambda)$. Derived
discrete algebras can be characterised in terms of generic
complexes. This follows from work of Bautista~\cite{Bautista} and we
recall the following result.

\begin{proposition}[{\cite{Bautista}*{Theorem~1.1}}]
  Let $\Lambda$ be a finite dimensional $k$-algebra which is not
  derived discrete. Then there exists a generic object $X$ in $\calD
  (\Mod \Lambda)$ such that the division ring $\End (X) / \rad \End
  (X)$ contains an element which is transcendental over $k$.\qed
\end{proposition}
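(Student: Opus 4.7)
The plan is to exploit the failure of derived discreteness in order to produce a positive-dimensional algebraic family of indecomposable perfect complexes, and then obtain the generic object $X$ by passing to the generic fibre of such a family.

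First I would fix a cohomology dimension vector $(h_n)_{n \in \bbZ}$ with only finitely many nonzero entries that is realised by infinitely many isomorphism classes of indecomposable objects in $\calD^b (\proj \Lambda)$; such a vector exists by the assumption that $\Lambda$ is not derived discrete. After replacing complexes by their minimal projective resolutions, one may represent these indecomposables by bounded complexes $P^a \to P^{a+1} \to \cdots \to P^b$ of projectives of uniformly bounded total $k$-dimension. Complexes of this prescribed shape are parametrised by the $k$-points of an affine variety $\calR$ on which the product of the automorphism groups $\operatorname{Aut}(P^i)$ acts by change of basis, with orbits corresponding to isomorphism classes. Since there are infinitely many orbits of indecomposables sitting inside a variety of finite dimension, a dimension count in the spirit of Crawley-Boevey's proof of the second Brauer--Thrall conjecture yields an irreducible subvariety $V \subseteq \calR$ of dimension at least one whose generic point parametrises an indecomposable complex.

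Second I would base-change the tautological family on $V$ to the residue field $K = k(V)$ of the generic point. The result is a bounded complex $X_K$ of free $K \otimes_k \Lambda$-modules, which I view as an object $X$ of $\calD (\Mod \Lambda)$ via restriction of scalars along $\Lambda \hookrightarrow K \otimes_k \Lambda$. Each cohomology $H^i (X)$ is a finite-dimensional $K$-vector space, and the natural embedding $K \hookrightarrow \End (X)$ by scalar multiplication shows that $H^i (X)$ is of finite length as an $\End (X)$-module. On the other hand $H^i (X)$ is infinite-dimensional over $k$ whenever $h_i \neq 0$, so $X$ does not belong to $\calD^b (\mod \Lambda)$; in particular $X$ is generic. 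Indecomposability of the generic fibre forces $\End (X)$ to be local, so $K \hookrightarrow \End (X)$ descends to an embedding $K \hookrightarrow \End (X) / \rad \End (X)$; since $\dim V \geq 1$, the field $K$ contains an element transcendental over $k$.

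The main obstacle is the first step: extracting a genuine positive-dimensional algebraic family of indecomposable complexes from the merely set-theoretic abundance of isomorphism classes realising a fixed cohomology dimension vector. In the module case this is handled by the one-parameter family arguments originating with Crawley-Boevey; transplanting them to $\calD^b (\proj \Lambda)$ forces one to work with a variety of complexes modulo quasi-isomorphism rather than modules modulo isomorphism, and to take enough care with the minimality of projective resolutions so that orbits in $\calR$ really do correspond to isomorphism classes in the derived category. Once such a family is in hand, the passage to the generic fibre is essentially formal; this is the content of Bautista's proof in \cite{Bautista}.
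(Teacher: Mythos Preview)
The paper does not give its own proof of this proposition; it is quoted from \cite{Bautista}, with the remark that the claim about $\End(X)/\rad\End(X)$ is extracted from Bautista's argument via \cite{CB1992}*{Theorem~9.5}. Your sketch has the right geometric shape, and you are right that manufacturing a genuine one-parameter family of indecomposable perfect complexes from the mere failure of discreteness is where most of the work lies. But the step you describe as ``essentially formal'' contains a real gap.

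You restrict the $K\otimes_k\Lambda$-complex $X_K$ along $\Lambda\hookrightarrow K\otimes_k\Lambda$ to obtain $X\in\calD(\Mod\Lambda)$ and then assert that indecomposability of the generic fibre forces $\End(X)$ to be local. But indecomposability of $X_K$ is a statement about $\End_{\calD(K\otimes_k\Lambda)}(X_K)$, whereas what you need is that $\End_{\calD(\Mod\Lambda)}(X)$ has no nontrivial idempotents, and this ring is typically much larger because $\Lambda$-linear endomorphisms of $X$ need not be $K$-linear. Restriction of scalars along a transcendental field extension routinely destroys indecomposability (already the one-dimensional $K$-module $K$ becomes an infinite direct sum over $k$), so you have not established that $X$ is indecomposable, that $\End(X)$ is local, or that $K$ survives in $\End(X)/\rad\End(X)$. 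What your construction does deliver is that $X$ is endofinite and lies outside $\calD^b(\mod\Lambda)$. Passing from this to an indecomposable generic summand whose residue division ring is not algebraic over $k$ is precisely the content supplied by Crawley-Boevey's structure theory of endofinite objects, in particular \cite{CB1992}*{Theorem~9.5}, and it is not a formality.
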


Note that the description of the endomorphism ring in
\cite{Bautista}*{Theorem~1.1} is a consequence of the proof which uses
\cite{CB1992}*{Theorem~9.5}.

  Next we combine Bautista's result with an argument due to Herzog
  \cite{Herzog}.  To be precise, Herzog proves a result about the
  abelianisation $\Ab(\mod\Lambda)$, but the same argument works for
$\Ab (\calD^b (\proj \Lambda))$ and yields the following.

\begin{proposition}[{\cite{Herzog}*{Theorem~3.6}}]
  \pushQED{\qed} Let $\Lambda$ be a finite dimensional $k$-algebra. If
  there exists a generic object $X$ in $\calD (\Mod \Lambda)$ such
  that $\End (X) / \rad \End (X)$ contains an element which is
  transcendental over $k$, then
\[ \KGdim \Ab (\calD^b (\proj \Lambda)) \geq 2.\qedhere\]
\end{proposition}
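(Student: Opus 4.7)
The proof I would write follows Herzog's argument for $\Ab(\mod\Lambda)$ in \cite{Herzog}*{Theorem~3.6}, transported to the triangulated setting $\calC = \Ab(\calD^b(\proj\Lambda))$. By Proposition~\ref{pr:KG}(2) and Lemma~\ref{le:KG}, it suffices to exhibit two finitely generated subfunctors $F \subseteq G$ of $H^0$ whose quotient $G/F$ has infinite length in $\calC$; then the collapsed lattice $L_1$ is nonzero and hence $\KGdim \calC \geq 2$. Throughout I would work with subfunctors of the form $\Im H_g$ for morphisms $g \colon \Lambda \to Z$ in $\calD^b(\proj\Lambda)$, which by Yoneda exhaust the finitely generated subfunctors of $H^0$.

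The separating device is evaluation at the generic object $X$. Each $F \in \calC$ extends canonically to a coproduct-preserving cohomological functor $\widehat F$ on $\calD(\Mod \Lambda)$, producing an exact functor
\[ e_X \colon \calC \longrightarrow \Mod E, \qquad F \mapsto \widehat F (X),\]
where $E = \End_{\calD(\Mod \Lambda)}(X)$. By genericity of $X$, together with induction on the number of triangles needed to build an object of $\calD^b(\proj\Lambda)$ from $\Lambda$, $e_X(F)$ has finite length over $E$ for every $F \in \calC$; in particular $e_X(H^0) = H^0 (X)$ is of finite $E$-length but of infinite $k$-dimension, since $X \notin \calD^b (\mod \Lambda)$. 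The description of $\calC_0$ recalled in the text, together with the concrete form of simple objects as $H_{X'} / \Im H_f$ arising from Auslander--Reiten triangles in $\calD^b(\proj\Lambda)$, then implies that $e_X$ sends $\calC_0$ into a proper Serre subcategory of finite-length $E$-modules. Consequently, any chain of $E$-submodules of $H^0(X)$ whose consecutive quotients lie outside this Serre subcategory pulls back to a chain of finitely generated subfunctors of $H^0$ whose consecutive quotients are outside $\calC_0$.

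To produce such an $E$-chain I would use the transcendental element. Lifting $\bar t \in E/\rad E$ to $\tilde t \in E$, every non-zero $p \in k[t]$ gives $p(\tilde t) \in E$ which is neither a unit nor nilpotent, so the principal right ideals $\{ p(\tilde t) E \}_p$ for distinct irreducible $p$ form an infinite antichain inside $E$. Their action on a composition factor of $H^0(X)$ on which $\tilde t$ operates as a genuine $k(t)$-scalar, which exists because $\bar t$ is transcendental over $k$, yields infinitely many distinct $E$-submodules of $H^0(X)$ whose consecutive quotients carry this $k(t)$-action and hence lie outside the Serre subcategory identified in the previous paragraph. Pulling back through $e_X$ gives the required non-collapsing chain in $L_1$. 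The main obstacle is the realisation step: showing that each of these $E$-submodules is actually of the form $e_X(F)$ for some finitely generated $F \subseteq H^0$. In Herzog's original proof this is handled by the pp-formula calculus in $\mod\Lambda$; in the derived setting one argues analogously, using that every finitely generated subfunctor of $H^0$ equals $\Im H_g$ for some $g \colon \Lambda \to Z$ and that the exact functor $e_X$ sends these images to a family of $E$-submodules of $H^0(X)$ cofinal enough to realise the submodules produced by the transcendence.
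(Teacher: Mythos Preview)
The paper does not prove this proposition; it is stated with a citation to \cite{Herzog}*{Theorem~3.6} and the preceding remark that ``the same argument works for $\Ab(\calD^b(\proj\Lambda))$''. So there is no proof in the paper to compare against beyond that attribution.

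Your outline follows the right template---evaluate functors at the generic object $X$ and exploit the transcendental element---but the specific mechanism you propose fails at two points before one even reaches the realisation step you flag. First, since $X$ is indecomposable and endofinite, it is pure-injective, and an indecomposable pure-injective object in a compactly generated triangulated category has \emph{local} endomorphism ring $E$. Thus every element of $E\setminus\rad E$ is a unit. For nonzero $p\in k[t]$ the image $p(\bar t)$ is nonzero in the division ring $E/\rad E$, so $p(\tilde t)\notin\rad E$ and hence $p(\tilde t)$ is a unit, contradicting your claim that it is ``neither a unit nor nilpotent''; the proposed antichain of principal right ideals $p(\tilde t)E$ collapses to the single ideal $E$. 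Second, $H^0(X)$ has finite length over $E$ by the very definition of a generic object, so its lattice of $E$-submodules is finite and cannot accommodate the infinitely many distinct $E$-submodules you require.

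Herzog's argument uses the transcendence differently. His Theorem~3.6 identifies $E/\rad E$ with the endomorphism ring of a suitable object in a localisation of the functor category, and the existence of an element transcendental over $k$ is an obstruction at the level of that endomorphism ring rather than a device for producing chains of submodules of $H^0(X)$. Transporting this to $\Ab(\calD^b(\proj\Lambda))$ requires the pure-injectivity and Ziegler-spectrum machinery for compactly generated triangulated categories, which is available in the literature but is not what you have sketched.
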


Our claim about the Krull--Gabriel dimension of an algebra which is
not derived discrete is an immediate consequence.

\begin{corollary}
\pushQED{\qed}  Let $\Lambda$ be a finite dimensional $k$-algebra which is not
  derived discrete. Then
  \[ \KGdim \Ab (\calD^b (\proj \Lambda)) \geq 2.\qedhere\]
\end{corollary}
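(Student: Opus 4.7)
The plan is simply to chain the two propositions immediately preceding the corollary. First, because $\Lambda$ is not derived discrete, the Bautista-type proposition supplies a generic object $X$ in $\calD(\Mod \Lambda)$ whose residue division ring $\End(X)/\rad \End(X)$ contains an element transcendental over $k$. Second, this $X$ is precisely the input required by the Herzog-type proposition, whose conclusion is the desired inequality $\KGdim \Ab(\calD^b(\proj \Lambda)) \geq 2$. The logical order is: non-derived-discrete $\Rightarrow$ generic object with transcendental residue $\Rightarrow$ Krull--Gabriel dimension at least $2$.

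There is no real obstacle, since the hypothesis of the second proposition is verbatim the conclusion of the first; the corollary is a one-line deduction and is therefore stated without an accompanying proof block. All the substantive work is packaged inside the two quoted propositions: Bautista's construction, via \cite{CB1992}*{Theorem~9.5}, of a sufficiently generic complex outside $\calD^b(\mod \Lambda)$, and Herzog's technique of translating a transcendental element in the endomorphism skew-field into an infinite strictly ascending chain of finitely generated subfunctors of $H_\Lambda$ that remain of infinite length modulo $\calC_0$, thereby witnessing via Proposition~\ref{pr:KG} that the Krull--Gabriel dimension must exceed $1$. If one wanted a fully self-contained argument, that is where the effort would go; for the corollary itself, nothing further is required.
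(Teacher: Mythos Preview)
Your proposal is correct and matches the paper's approach exactly: the corollary is stated as an immediate consequence of the two preceding propositions, with no additional argument, and your chain ``not derived discrete $\Rightarrow$ generic object with transcendental residue $\Rightarrow$ $\KGdim \geq 2$'' is precisely the intended reasoning. The extra commentary on what underlies the two propositions is accurate but not needed for the corollary itself.
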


\section{The category of perfect complexes}
\label{section_category}

Throughout this section we fix $(r, n, m) \in \Omega$ such that $r <
n$ and we put $\Lambda := \Lambda (r, n, m)$. Note that the condition
$r < n$ is equivalent to $\gldim \Lambda < \infty$. In this section we
follow~\cite{Bobinski} and describe a quiver $\Gamma$ together with a
set $\calR$ of relations such that the category $\calD^b (\proj
\Lambda)$ is equivalent to the path category $k\Gamma$ modulo the
given relations (see for example \cite{Ringel}*{\S2.1} for the
definition of $k\Gamma$). We refer to~\cite{BrPaPl} for a detailed
study of morphisms in $\calD^b (\proj \Lambda)$; the diagrams in there
might help to understand our calculations.

For $i \in [0, r - 1]$ we set
\begin{align*}
I_i &:= \bbZ^2,\\ I_i' &:= \{ (a, b) \in \bbZ^2 \mid a \leq b +
\delta_{i, 0} \cdot m \},\\
I_i'' &:= \{ (a, b) \in \bbZ^2 \mid a + \delta_{i, 0} \cdot n \leq b \},
\end{align*}
where $\delta_{x, y}$ is the Kronecker delta. The set of vertices
of $\Gamma$ is
\begin{multline*}
\Gamma_0:=\{X_v^{(i)}\mid i \in [0, r - 1],\,v \in
I_i'\}\,\cup\,\{Y_v^{(i)}\mid i \in [0, r - 1],\,v \in I_i''\}\\
\cup \,\{Z_v^{(i)}\mid i \in [0, r - 1],\,v \in I_i\}.
\end{multline*}

Now we describe the arrows in $\Gamma$ and associate to each
arrow a \emph{degree}. There are three cases.

(1) Fix $i \in [0, r - 1]$ and $v := (a, b) \in I_i'$. We put
\begin{align*}
  \calI_v'^{(i)} &:= [a, b + \delta_{i, 0} \cdot m ] \times [b,
  \infty), \\ \calX_v^{(i)} &:= [a, b + \delta_{i, 0}
  \cdot m] \times \bbZ, \\
  \calX_v'^{(i)} &:= (-\infty, a + \delta_{i, r - 1} \cdot m] \times
  [a, b + \delta_{i, 0} \cdot m].
\end{align*} For $u \in \calI_v'^{(i)}$, $u \neq v$, there is an
arrow $f_{v, u}'^{(i)} \colon X_v^{(i)} \to X_u^{(i)}$ of degree
$0$. Next, for $u \in \calX_v^{(i)}$ there is an arrow $g_{v,
u}'^{(i)} \colon X_v^{(i)} \to Z_u^{(i)}$ of degree $1$. Finally, for
$u \in \calX_v'^{(i)}$ there is an arrow $e_{v, u}'^{(i)} \colon
X_v^{(i)} \to X_u^{(i + 1)}$ of degree $2$, where we always change the
upper index modulo $r$.

(2) Fix $i \in [0, r - 1]$ and $v := (a, b) \in I_i''$. We put
\begin{align*} \calI_v''^{(i)} &:= [a, b - \delta_{i, 0} \cdot n]
\times [b, \infty), \\\calY_v^{(i)} &:= \bbZ \times [a, b -
\delta_{i, 0} \cdot n], \\ \calY_v'^{(i)} &:=
(-\infty, a - \delta_{i, r - 1} \cdot n] \times [a, b - \delta_{i, 0}
\cdot n].
\end{align*} For $u \in \calI_v''^{(i)}$, $u \neq v$, there is an
arrow $f_{v, u}''^{(i)} \colon Y_v^{(i)} \to Y_u^{(i)}$ of degree
$0$. Next, for $u \in \calY_v^{(i)}$ there is an arrow $g_{v,
u}''^{(i)} \colon Y_v^{(i)} \to Z_u^{(i)}$ of degree $1$. Finally, for
$u \in \calY_v'^{(i)}$ there is an arrow $e_{v, u}''^{(i)} \colon
Y_v^{(i)} \to Y_u^{(i + 1)}$ of degree $2$.

(3) Fix $i \in [0, r - 1]$ and $v := (a, b) \in I_i$. We put
\begin{align*} \calI_v^{(i)} &:= [a, \infty) \times [b, \infty), \\
\calZ_v'^{(i)} &:= (-\infty, a + \delta_{i, r - 1} \cdot m] \times [a,
\infty), \\ \calZ_v''^{(i)} &:= (-\infty, b - \delta_{i, r - 1} \cdot
n] \times [b, \infty), \\   \calZ_v^{(i)} &:= (-\infty,
a + \delta_{i, r - 1} \cdot m] \times (\infty, b - \delta_{i, r - 1}
\cdot n].
\end{align*} For $u \in \calI_v^{(i)}$, $u \neq v$, there is an arrow
$f_{v, u}^{(i)} \colon Z_v^{(i)} \to Z_u^{(i)}$ of degree $0$. Next,
for $u \in \calZ_v'^{(i)}$ there is an arrow $h_{v, u}'^{(i)} \colon
Z_v^{(i)} \to X_u^{(i + 1)}$ of degree $1$. Similarly, for $u \in
\calZ_v''^{(i)}$ there is an arrow $h_{v, u}''^{(i)} \colon Z_v^{(i)}
\to Y_u^{(i + 1)}$ of degree $1$. Finally, for $u \in \calZ_v^{(i)}$
there is an arrow $e_{v, u}^{(i)} \colon Z_v^{(i)} \to Z_u^{(i + 1)}$
of degree $2$.

Now we describe the set $\calR$ of relations. Let $f \colon X \to Y$
and $g \colon Y \to Z$ be arrows of degree $p$ and $q$,
respectively. If there is an arrow $h \colon X \to Z$ of degree $p +
q$, then we have the relation $g f = h$, otherwise we have the
relation $g f = 0$.

We summarise our construction.

\begin{proposition} \label{proposition category} There exists a
$k$-linear equivalence $k \Gamma / \calR\xrightarrow{\sim}\calD^b (\proj
\Lambda)$.
\end{proposition}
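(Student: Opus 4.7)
The strategy is to exhibit an explicit $k$-linear functor $\Phi\colon k\Gamma \to \calD^b(\proj \Lambda)$, show that it kills every relation in $\calR$, and then verify that the induced functor $\bar\Phi\colon k\Gamma/\calR \to \calD^b(\proj \Lambda)$ is an equivalence. The classification of indecomposable objects and of morphisms between them in $\calD^b(\proj \Lambda)$ for the algebras $\Lambda(r,n,m)$ has been worked out in \cite{Bobinski}, so our task is to repackage that description into the combinatorial form given by $\Gamma$ and $\calR$.

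On objects, I would send each vertex $X_v^{(i)}$, $Y_v^{(i)}$, $Z_v^{(i)}$ to a fixed indecomposable perfect complex as follows: the upper index $i \in [0,r-1]$ indexes the ``sheet'', reflecting the $r$-fold cyclic structure of $\Lambda(r,n,m)$, while $v=(a,b) \in \bbZ^2$ records the two endpoints of the corresponding string. The three labels $X$, $Y$, $Z$ distinguish which of the two ends of the string is engaged with the cycle, and the defining inequalities on $I_i'$ and $I_i''$ express exactly the admissibility conditions for such strings. By \cite{Bobinski}, this assignment is a bijection between vertices of $\Gamma$ and isomorphism classes of indecomposable objects of $\calD^b(\proj \Lambda)$, so essential surjectivity of $\bar\Phi$ is automatic.

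On arrows, I would assign to each arrow the corresponding \emph{graph map} between the two string complexes: the degree $0$ arrows $f$ are obtained by shortening a string on the right-hand end, the degree $1$ arrows $g$, $h'$, $h''$ are the ``connecting'' graph maps that cross a single relation boundary of the cycle, and the degree $2$ arrows $e$ are those that pass through a full sheet. The combinatorial conditions encoded in $\calI_v'^{(i)}$, $\calX_v^{(i)}$, $\calX_v'^{(i)}$ and their $Y$- and $Z$-analogues are precisely the conditions under which a nonzero graph map of the given type exists, and the collection of all such graph maps — together with the identities — provides a basis of the relevant $\Hom$-spaces by the computations in \cite{Bobinski}. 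This takes care of full faithfulness.

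The main obstacle is then the verification of $\calR$: for every composable pair of arrows $f,g$ in $\Gamma$ one has to evaluate the composition of the corresponding graph maps in $\calD^b(\proj \Lambda)$ and confirm that it is either zero or equals the unique arrow of total degree $\deg f + \deg g$ matching the source and target. This splits into a finite but sizable case analysis organised by the triple of types $X/Y/Z$ at source, middle, and target and by the pair $(\deg f,\deg g) \in \{0,1,2\}^2$. Particular care is needed at the transitions $i \mapsto i+1 \pmod{r}$, where the Kronecker corrections $\delta_{i,0}$ and $\delta_{i,r-1}$ appearing in the definitions of $I_i'$, $I_i''$, $\calX_v'^{(i)}$, $\calY_v'^{(i)}$, $\calZ_v'^{(i)}$, $\calZ_v''^{(i)}$ govern precisely when an arrow of the required total degree exists in $\Gamma$. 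Modulo this bookkeeping, which is carried out in \cite{Bobinski}, the proposition follows.
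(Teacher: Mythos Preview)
Your proposal is correct in outline and would yield the result, but it follows a different route from the paper's own argument. You propose to build the equivalence directly: assign to each vertex of $\Gamma$ an explicit string complex, to each arrow an explicit graph map, and then verify the relations and the full-faithfulness by a case analysis on pairs of composable arrows, leaning on the Hom-space computations in \cite{Bobinski}. The paper instead proceeds inductively along the Auslander--Reiten quiver. It first invokes the description of the AR quiver from \cite{BobinskiGeissSkowronski} (the $2r$ components of type $\bbZ\bbA_\infty$ for the $X$- and $Y$-vertices and the $r$ components of type $\bbZ\bbA_\infty^\infty$ for the $Z$-vertices), then uses string combinatorics \cites{Crawley-Boevey, Krause} only to verify the description of $H_X$ for $X$ on the \emph{border} of a $\bbZ\bbA_\infty$ component. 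From there the description of $H_X$ for all remaining $X$ is propagated by induction via Auslander--Reiten triangles, first through the $\bbZ\bbA_\infty$ components and then, after checking two neighbouring base objects, through the $\bbZ\bbA_\infty^\infty$ components.

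The trade-off is this: your approach is conceptually transparent (one functor, one basis, one table of compositions) but the ``sizable case analysis'' you mention is genuinely large, and the composition rules for graph maps between string complexes are delicate exactly at the sheet transitions you flag. The paper's AR-triangle induction replaces that global case analysis by a small number of base-case verifications plus a uniform inductive step, at the cost of being less explicit about which morphism corresponds to which arrow. Either route is acceptable; if you keep yours, you should make clear that the basis property of the graph maps and their composition behaviour are being imported wholesale from \cite{Bobinski}, since that is where the real work lies.
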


\begin{proof} The Auslander--Reiten quiver of $\calD^b (\proj
  \Lambda)$ has been described in~\cite{BobinskiGeissSkowronski}. It
  consists of $2 r$ components of type $\bbZ \bbA_\infty$ (they
  correspond to $X$- and $Y$-vertices of $\Gamma)$ and $r$ components
  of type $\bbZ \bbA_\infty^\infty$ (they correspond to $Z$-vertices
  of $\Gamma$). Moreover, under the action of the shift $\Sigma$ the
  components fall into three orbits, consisting of $r$ components
  each. The objects lying on the border of $\bbZ \bbA_\infty$
  components have been also identified. Using string combinatorics
  \cites{Crawley-Boevey, Krause} it is straightforward to verify the
  description of $H_X$ for $X$ lying on the border of a component of
  type $\bbZ \bbA_\infty$. By induction, using Auslander--Reiten
  triangles, the description of $H_X$ follows for each $X$ in the
  components of type $\bbZ \bbA_\infty$. Then we verify this
  description for two (cleverly) chosen neighbouring objects in a
  component of type $\bbZ \bbA_\infty^ \infty$ and proceed again by
  induction (using Auslander--Reiten triangles) to finish the proof.
\end{proof}

For  future use we introduce the following
notation. For $i \in [0, r - 1]$ set
\[f_{v, v}'^{(i)} := \id_{X_v^{(i)}}, \; v \in I_i',
\quad f_{v, v}''^{(i)} := \id_{Y_v^{(i)}}, \; v \in I_i'', \quad
 f_{v, v}^{(i)} := \id_{Z_v^{(i)}}, \; v \in I_i.
\]
Also, we let $f_{v, u}'^{(i)}$ denote the zero morphism $X_v^{(i)} \to
0$, if $v \in I_i'$, and $u \not \in I_i'$. The same convention applies
to $f_{v, u}''^{(i)}$, $h_{v, u}'^{(i)}$ and $h_{v, u}''^{(i)}$.

\section{The Krull--Gabriel dimension for algebras of finite global
  dimension} \label{section calculations}

Let $(r, n, m) \in \Omega$ be such that $r < n$. We consider $\calC :=
\Ab (\calD^b (\proj \Lambda))$ for $\Lambda := \Lambda (r, n, m)$. Our
aim is to prove that $\calC_1 \neq \calC$, but $\calC_2 = \calC$. In
order to show the latter claim, it is enough to prove that $H_U \in
\calC_2$ for each indecomposable $U \in \calD^b (\proj \Lambda)$. In
fact, we will prove that $H_U$ is either zero or simple in $\calC /
\calC_1$. Note that in order to prove that $H_U$ (more generally, $H_U
/ \Im H_g$, where $g \colon U \to M$ is a morphism) is either zero or
simple in $\calC / \calC_{n - 1}$ for some $n \in \bbN$, it is enough
to prove that, for every non-zero map $f \colon U \to V$ with $V$
indecomposable, either $\Im H_f \underset{\calC / \calC_{n - 1}}{=} 0$
or $\Im H_f \underset{\calC / \calC_{n - 1}}{=} H_U$ (either $\Im H_f
/ (\Im H_f \cap \Im H_g) \underset{\calC / \calC_{n - 1}}{=} 0$ or
$\Im H_f / (\Im H_f \cap \Im H_g) \underset{\calC / \calC_{n - 1}}{=}
H_U / \Im H_g$), where the subscript means that the equalities hold in
$\calC / \calC_{n - 1}$.

We begin with the description of the
simple objects in $\calC$.

\begin{lemma} \label{lemma simple0} The simple object in $\calC$ are
\begin{enumerate}

\item $A_v'^{(i)} := H_{X_v^{(i)}} / \Im H_{(f_{v, v + (1, 0)}'^{(i)},
f_{v, v + (0, 1)}'^{(i)})^{\tr}}$, $i \in [0, r - 1]$, $v \in I_i'$,

\item $A_v''^{(i)} := H_{Y_v^{(i)}} / \Im H_{(f_{v, v + (1,
0)}''^{(i)}, f_{v, v + (0, 1)}''^{(i)})^{\tr}}$, $i \in [0, r - 1]$,
$v \in I_i''$,

\item $A_v^{(i)} := H_{Z_v^{(i)}} / \Im H_{(f_{v, v + (1, 0)}^{(i)},
f_{v, v + (0, 1)}^{(i)})^{\tr}}$, $i \in [0, r - 1]$, $v \in I_i$.

\end{enumerate}
\end{lemma}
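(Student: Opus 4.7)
The plan is to apply the correspondence, recalled just before Proposition~\ref{proposition category}, between simple objects of $\calC$ and Auslander--Reiten triangles in $\calD^b(\proj\Lambda)$: every simple object has the form $H_U/\Im H_g$ where $U \xrightarrow{g} M$ is the left-hand map of an AR triangle whose left end $U$ is indecomposable, and distinct AR triangles yield pairwise non-isomorphic simples. So the task reduces to identifying, for each indecomposable $U$ in $\calD^b(\proj\Lambda)$, the left-hand map of the AR triangle starting at $U$ and exhibiting it in the form displayed in (1)--(3).

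Using the description of the Auslander--Reiten quiver from~\cite{BobinskiGeissSkowronski} recalled in the proof of Proposition~\ref{proposition category}, the AR quiver has $2r$ components of type $\bbZ\bbA_\infty$ corresponding to the $X$- and $Y$-vertices, and $r$ components of type $\bbZ\bbA_\infty^\infty$ corresponding to the $Z$-vertices. Under the identification of~\cite{Bobinski}, the irreducible morphisms out of an indecomposable at coordinate $v = (a,b)$ go precisely to the indecomposables at $v+(1,0)$ and $v+(0,1)$, whenever those coordinates lie in the appropriate index set $I_i$, $I_i'$, or $I_i''$; moreover, up to a nonzero scalar, these irreducible morphisms are the degree-$0$ generators $f_{v, v+(1,0)}$ and $f_{v, v+(0,1)}$ of $k\Gamma$.

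With this in hand I would work through the three cases. For $v \in I_i$, both $v+(1,0)$ and $v+(0,1)$ lie in $\calI_v^{(i)} = [a,\infty)\times[b,\infty)$, so the AR triangle starting at $Z_v^{(i)}$ has left-hand map
\[
\bigl(f_{v, v+(1,0)}^{(i)},\, f_{v, v+(0,1)}^{(i)}\bigr)^{\tr} \colon Z_v^{(i)} \longrightarrow Z_{v+(1,0)}^{(i)} \oplus Z_{v+(0,1)}^{(i)},
\]
yielding the simple object $A_v^{(i)}$ of part~(3). For $v = (a,b) \in I_i'$ either both neighbours lie in $I_i'$, in which case the AR triangle has a middle term that is a direct sum of two indecomposables, or $v$ is on the boundary $a = b + \delta_{i,0}m$ and $v+(1,0) \notin I_i'$, in which case by the convention introduced at the end of Section~\ref{section_category} the entry $f_{v, v+(1,0)}'^{(i)}$ is zero and the AR triangle has middle term $X_{v+(0,1)}^{(i)}$ alone. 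Either way, the formula in part~(1) encodes the correct left-hand map, and case~(2) is treated identically with $Y$ in place of $X$ and $I_i''$ in place of $I_i'$.

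Finally, I would check exhaustiveness: by Proposition~\ref{proposition category} every indecomposable object of $\calD^b(\proj\Lambda)$ appears exactly once among the $X_v^{(i)}$, $Y_v^{(i)}$, $Z_v^{(i)}$, so the list~(1)--(3) covers all AR triangles and hence all simple objects of $\calC$. The main subtle point is the coordinate matching with Bobinski's combinatorics: one must verify that at an interior vertex the irreducible morphisms are precisely the two degree-$0$ arrows in the directions $+(1,0)$ and $+(0,1)$, and not some other linear combination of degree-$0$ morphisms out of $v$. Once this identification is set up, the description of the simples is a direct application of the general correspondence between simples in the abelianisation and AR triangles in the underlying triangulated category.
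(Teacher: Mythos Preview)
Your approach is correct and is exactly the paper's: invoke the standard correspondence between simple objects in $\Ab(\calT)$ and Auslander--Reiten triangles in $\calT$, then read off the left almost split maps from the known Auslander--Reiten quiver of $\calD^b(\proj\Lambda)$ as described in~\cite{BobinskiGeissSkowronski}. The paper's own proof is a one-line reference to this correspondence (note: it is recalled in Section~\ref{section dimension}, not just before Proposition~\ref{proposition category}), and your write-up simply unpacks the details that the paper leaves implicit.
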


\begin{proof}
  This follows from the well-known description of the
  Auslander--Reiten triangles in $\calD^b(\proj\Lambda)$; see
  Section~\ref{section dimension}.
\end{proof}

Now we move to the category $\calC / \calC_0$. We first describe the
simple objects.

\begin{lemma} \label{lemma simple1} The objects
\begin{enumerate}

\item $B_{a, b, b'}'^{(i)} := H_{X_{(a, b)}^{(i)}} / \Im H_{(f_{(a,
b), (a + 1, b)}'^{(i)}, g_{(a, b), (a, b')}'^{(i)})^{\tr}}$ for $i \in
[0, r - 1]$, $(a, b) \in I_i'$, $b' \in \bbZ$,

\item $B_{a, b, b'}''^{(i)} := H_{Y_{(a, b)}^{(i)}} / \Im H_{(f_{(a,
b), (a + 1, b)}''^{(i)}, g_{(a, b), (b', a)}''^{(i)})^{\tr}}$ for $i
\in [0, r - 1]$, $(a, b) \in I_i''$, $b' \in \bbZ$,

\item $C_{a, b, b'}'^{(i)} := H_{Z_{(a, b)}^{(i)}} / \Im H_{(f_{(a,
b), (a + 1, b)}^{(i)}, h_{(a, b), (b', a)}'^{(i)})^{\tr}}$ for $i \in
[0, r - 1]$, $(a, b) \in I_i$, $b' \in (-\infty, a + \delta_{i, r - 1}
\cdot m + 1]$,

\item $C_{a, b, a'}''^{(i)} := H_{Z_{(a, b)}^{(i)}} / \Im H_{(f_{(a,
b), (a, b + 1)}^{(i)}, h_{(a, b), (a', b)}''^{(i)})^{\tr}}$ for $i \in
[0, r - 1]$, $(a, b) \in I_i$, $a' \in (-\infty, b -\delta_{i, r - 1}
\cdot n + 1]$,

\end{enumerate} are simple in $\calC / \calC_0$.
\end{lemma}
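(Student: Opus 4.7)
The plan is to verify, for each of the listed functors $F$, two things: that $F \notin \calC_0$, and that every finitely generated subfunctor $G \subseteq F$ lies in $\calC_0$ or has $F/G \in \calC_0$. Recall that $F \in \calC_0$ precisely when $\sum_{W \in \ind \calD^b (\proj \Lambda)} \dim_k F(W) < \infty$, as explained in Section~\ref{section dimension}.

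I focus on the representative case $F = B_{a,b,b'}'^{(i)}$. By Proposition~\ref{proposition category}, $\Hom(X_{(a,b)}^{(i)}, W)$ is at most one-dimensional for each indecomposable $W$, being spanned by the unique arrow of $\Gamma$ from $X_{(a,b)}^{(i)}$ to $W$ whenever such an arrow exists. Thus $F(W)$ is zero- or one-dimensional, and $F(W) \neq 0$ exactly when the relevant arrow does not factor through $(f_{(a,b),(a+1,b)}'^{(i)}, g_{(a,b),(a,b')}'^{(i)})^{\tr}$. A direct case analysis, distinguishing whether $W$ is an $X$-, $Y$-, or $Z$-vertex of $\Gamma$ and applying the composition rules of $\calR$, should show that the support of $F$ consists of two infinite families, namely $\{X_{(a,b+\ell)}^{(i)} : \ell \geq 0\}$ and $\{Z_{(a,u)}^{(i)} : u \leq b'-1\}$, on each of which $F$ is one-dimensional. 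In particular, $F \notin \calC_0$.

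For the second part, a finitely generated subfunctor $G \subseteq F$ is a finite sum of images $\Im(H_W \to F)$ for various $W$ in the support. Computing these images using the composition rules of $\calR$ gives the following dichotomy: a section at some $Z_{(a,u_0)}^{(i)}$ generates a subfunctor supported on the \emph{finite} set $\{Z_{(a,u)}^{(i)} : u_0 \leq u \leq b'-1\}$, whereas a section at some $X_{(a,b+\ell_0)}^{(i)}$ generates a subfunctor supported on the \emph{cofinite} set $\{X_{(a,b+\ell)}^{(i)} : \ell \geq \ell_0\} \cup \{Z_{(a,u)}^{(i)} : u \leq b'-1\}$ (the key point being that post-composition with any degree-$1$ arrow $g'$ from an $X$-vertex in the support reaches every $Z$-vertex in the support). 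Consequently, any finitely generated $G$ has either finite support (so $G \in \calC_0$) or cofinite support (so $F/G \in \calC_0$), confirming that $F$ is simple in $\calC/\calC_0$. The cases $B_{a,b,b'}''^{(i)}$, $C_{a,b,b'}'^{(i)}$, and $C_{a,b,a'}''^{(i)}$ follow the same pattern, with the two support families lying in the appropriate $Y$- or $Z$-components of the Auslander--Reiten quiver.

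The hard part will be the combinatorial case analysis itself: morphisms between all three types of vertices ($X$, $Y$, $Z$) must be tracked, the cyclical shift of the upper index $i$ modulo $r$ introduces extra care at $i = 0$ and $i = r-1$, and the boundary phenomena encoded by the Kronecker deltas $\delta_{i,0}$ and $\delta_{i,r-1}$ modify the index sets $\calI$, $\calX$, $\calY$, $\calZ$. In particular, degenerate sub-cases such as $b' = a + \delta_{i,r-1} \cdot m + 1$ for $C_{a,b,b'}'^{(i)}$, where one of the defining morphisms vanishes by the convention at the end of Section~\ref{section_category}, must be handled separately to confirm the form of the support and the finite/cofinite dichotomy for generated subfunctors.
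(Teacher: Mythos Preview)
Your plan is correct and is a concrete, pointwise version of the paper's argument. The paper also treats only case~(1) and argues the others are analogous; its organisation differs in that, instead of computing the full support of $B_{a,b,b'}'^{(i)}$ and establishing the finite/cofinite dichotomy directly, it proceeds by short exact sequences with simple third term $A_v'^{(i)}$ or $A_v^{(i)}$ (from Lemma~\ref{lemma simple0}) to show inductively that each $\Im H_f/(\Im H_f\cap\Im H_g)$ equals $0$ or $B_{a,b,b'}'^{(i)}$ in $\calC/\calC_0$. This is exactly your ``peel off one point of support at a time'' picture, phrased categorically: each step of the induction corresponds to removing a single vertex from the support and recognising the difference as a length-one object of $\calC_0$. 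Your version makes the support of the simples in $\calC/\calC_0$ explicit, at the cost of a longer bookkeeping list (and, as you note, the boundary cases $i=0$, $i=r-1$ and the degenerate parameter values where one defining morphism vanishes require separate verification); the paper's version is shorter to write and avoids enumerating the support, but leaves the actual shape of these objects implicit. Either buys the same conclusion.
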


\begin{remark} One may easily show that every simple object in $\calC
/ \calC_0$ is (up to isomorphism) of the above form. It is also not
difficult to describe the isomorphism classes of the above objects.
\end{remark}

\begin{proof} We only prove the first claim; the remaining ones are
  proved similarly. Let $i \in [0, r - 1]$, $(a, b) \in I_i'$ and $b'
  \in \bbZ$. It is clear that $B_{a, b, b'}'^{(i)} \underset{\calC /
    \calC_0}{\neq} 0$. This follows, since for each $n \in \bbN$ we
  have in $\calC$ a short exact sequence
\[ 0 \to B_{a, b + n + 1, b'}'^{(i)} \to B_{a, b + n, b'}'^{(i)} \to
A_{(a, b + n)}'^{(i)} \to 0.
\]

For a non-zero morphism $f \colon X_{(a, b)}^{(i)} \to V$ with $V$
indecomposable we put
\[ B_f' := \Im H_f / (\Im H_f \cap \Im H_{(f_{(a, b), (a + 1,
b)}'^{(i)}, g_{(a, b), (a, b')}'^{(i)})^{\tr}}).
\] We have to show that either $B_f' \underset{\calC / \calC_0}{=} 0$
or $B_f' \underset{\calC / \calC_0} = B_{a, b, b'}'^{(i)}$ for every
$f$ as above. We may assume that we are in one of the following cases:
\begin{enumerate}

\item $V = X_{(c, d)}^{(i)}$ and $f = f_{(a, b), (c, d)}'^{(i)}$ for
some $(c, d) \in \calI_{(a, b)}'^{(i)}$,

\item $V = Z_{(c, d)}^{(i)}$ and $f = g_{(a, b), (c, d)}'^{(i)}$ for
some $(c, d) \in \calX_{(a, b)}^{(i)}$,

\item $V = X_{(c, d)}^{(i + 1)}$ and $f = e_{(a, b), (c, d)}'^{(i)}$
for some $(c, d) \in \calX_{(a, b)}'^{(i)}$.

\end{enumerate}

\textit{Case~$(1)$}. If $c > a$, then $f$ factors through $f_{(a, b),
  (a + 1, b)}'^{(i)}$, hence $B_f'$ is the zero subobject of $B_{a, b,
  b'}'^{(i)}$. If $c = a$, then we prove by induction on $d$ that
$B_f' \underset{\calC / \calC_0}{=} B_{a, b, b'}'^{(i)}$. Indeed, if
$d = b$, then the claim is obvious. If $d > b$, then we have a short
exact sequence
\[
0 \to B_f' \to B_{f_{(a, b), (a, d - 1)}'^{(i)}}' \to A_{(a, d - 1)}'^{(i)} \to 0,
\]
hence $B_f' \underset{\calC / \calC_0}{=} B_{f_{(a, b), (a, d -
    1)}'^{(i)}}'$ by Lemma~\ref{lemma simple0}. Moreover, $B_{f_{(a,
    b), (a, d - 1)}'^{(i)}}' \underset{\calC / \calC_0}{=} B_{a, b,
  b'}'$ by induction.

\textit{Case~$(2)$}. We prove that $B_f' \underset{\calC / \calC_0}{=}
0$ in this case. Again, we may assume that $c = a$. If $d \geq b'$,
then $f$ factors through $g_{(a, b), (a, b')}'^{(i)}$, hence $B_f'$ is
again zero. Finally, if $d < b'$, then we have a short exact sequence
\[ 0 \to B_{g_{(a, b), (c, d + 1)}'^{(i)}}' \to B_f' \to A_{(a, d)}
\to 0,
\] hence the claim follows by an obvious induction.

\textit{Case~$(3)$}. In this case $f$ factors through $(f_{(a, b), (a
+ 1, b)}'^{(i)}, g_{(a, b), (a, b')}'^{(i)})^{\tr}$, hence $B_f'$ is
zero.
\end{proof}

Now we show that some representable functors have finite length in
$\calC / \calC_0$.

\begin{lemma} \label{lemma finite1} Let $i \in [0, r - 1]$.
\begin{enumerate}

\item If $v \in I_i'$, then $H_{X_v^{(i)}} \in \calC_1$.

\item If $v \in I_i''$, then $H_{Y_v^{(i)}} \in \calC_1$.

\end{enumerate}
\end{lemma}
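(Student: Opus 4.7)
The strategy is to prove (1) by induction on the nonnegative integer $N := b + \delta_{i,0}\cdot m - a$, which measures the horizontal distance from $v=(a,b)\in I_i'$ to the right edge of $I_i'$; the proof of (2) is entirely symmetric, with $Y$, $I_i''$ and the corresponding $g''$-arrows playing the roles of $X$, $I_i'$ and $g'$.

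In the base case $N=0$, the vertex $(a+1,b)$ no longer lies in $I_i'$, so the arrow $f_{(a,b),(a+1,b)}'^{(i)}$ does not exist and the simple $B_{a,b,b'}'^{(i)}$ from Lemma~\ref{lemma simple1}(1) collapses to $H_{X_v^{(i)}}/\Im H_{g_{(a,b),(a,b')}'^{(i)}}$. The plan is to choose $b'$ extremally (for instance with $|b'|$ large relative to $a$ and $b$) so that $\Im H_{g_{(a,b),(a,b')}'^{(i)}}$ lies in $\calC_0$; this forces $H_{X_v^{(i)}}$ to coincide with the simple object $B_{a,b,b'}'^{(i)}$ in $\calC/\calC_0$ and hence to lie in $\calC_1$.

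For the inductive step $N>0$, abbreviate $f := f_{(a,b),(a+1,b)}'^{(i)}$ and consider the short exact sequence
\[0 \to \Im H_f \to H_{X_v^{(i)}} \to H_{X_v^{(i)}}/\Im H_f \to 0\]
in $\calC$. The subobject $\Im H_f$ is a quotient of $H_{X_{(a+1,b)}^{(i)}}$, which belongs to $\calC_1$ by the inductive hypothesis (its parameter is $N-1$); since $\calC_1$ is a Serre subcategory, $\Im H_f\in\calC_1$. For the cokernel, fix $b'$ as in the base case and factor the canonical surjection $H_{X_v^{(i)}}\twoheadrightarrow B_{a,b,b'}'^{(i)}$ through $H_{X_v^{(i)}}/\Im H_f$; its kernel is $\Im H_g/(\Im H_g\cap \Im H_f)$ with $g := g_{(a,b),(a,b')}'^{(i)}$. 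Showing this kernel lies in $\calC_0$ identifies $H_{X_v^{(i)}}/\Im H_f$ with $B_{a,b,b'}'^{(i)}$ in $\calC/\calC_0$, hence places it in $\calC_1$. Closure of $\calC_1$ under extensions then yields $H_{X_v^{(i)}}\in\calC_1$.

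The principal obstacle is the assertion that $\Im H_g/(\Im H_g\cap \Im H_f)$ (and $\Im H_g$ itself in the base case) lies in $\calC_0$ for a suitably chosen $b'$. Evaluated at an indecomposable $W$, this subfunctor records morphisms $X_v^{(i)}\to W$ factoring through $Z_{(a,b')}^{(i)}$ modulo those factoring through $X_{(a+1,b)}^{(i)}$; using the morphism description from Proposition~\ref{proposition category} one sees that such factorisations force $W$ to lie in the next cyclic orbit (via the degree-$1$ arrows $h'$ and $h''$) and within a bounded region determined by $b'$. A careful coordinate bookkeeping over the three types of targets $X^{(j)}$, $Y^{(j)}$, $Z^{(j)}$ then confines the set of such $W$ to a finite collection on which the relevant $\Hom$-spaces are finite-dimensional.
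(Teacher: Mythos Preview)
Your inductive framework matches the paper's, but the central claim---that for a suitable $b'$ the subobject $\Im H_g/(\Im H_g\cap\Im H_f)$ (and $\Im H_g$ in the base case) lies in $\calC_0$---is false, and no choice of $b'$ rescues it. With $g = g_{(a,b),(a,b')}'^{(i)} \colon X_v^{(i)} \to Z_{(a,b')}^{(i)}$, compose $g$ with the degree-$0$ arrows $f_{(a,b'),(a,d)}^{(i)} \colon Z_{(a,b')}^{(i)} \to Z_{(a,d)}^{(i)}$ for $d \ge b'$. Each composite is the nonzero degree-$1$ arrow $g_{v,(a,d)}'^{(i)}$, and since its target has first coordinate $a$ it cannot factor through $f = f_{(a,b),(a+1,b)}'^{(i)}$ (whose image records only arrows with first coordinate $\ge a+1$). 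Hence $\Im H_g/(\Im H_g\cap\Im H_f)$ is nonzero on the infinitely many indecomposables $Z_{(a,d)}^{(i)}$, $d\ge b'$, and therefore does not belong to $\calC_0$. The error in your final paragraph is the assertion that factoring through $Z_{(a,b')}^{(i)}$ forces the target $W$ into the next cyclic orbit: it does not, because $Z_{(a,b')}^{(i)}$ has degree-$0$ arrows to other $Z^{(i)}$-vertices in the \emph{same} orbit, and these produce an unbounded ray of nonvanishing.

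The repair is exactly what the paper supplies. That quotient is not zero in $\calC/\calC_0$ but it is \emph{simple} there: one checks that $\Im H_g/(\Im H_g\cap\Im H_f)$ (for $b'=0$, say) is isomorphic to $C_{a,0,a+\delta_{i,r-1}\cdot m+1}'^{(i)}$, which is one of the simple objects of Lemma~\ref{lemma simple1}(3). Thus $H_{X_v^{(i)}}/\Im H_f$ sits in a short exact sequence between $C'^{(i)}$ and $B'^{(i)}$, so it has length two in $\calC/\calC_0$ and lies in $\calC_1$. Your induction then closes as written. In short, you need parts (1) \emph{and} (3) of Lemma~\ref{lemma simple1}; part (1) alone is not enough, because $H_{X_v^{(i)}}$ is not simple in $\calC/\calC_0$.
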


\begin{proof} Again, we only prove the first claim. Let $v = (a,
  b)$. The claim is shown by induction on $b - a$. If $b - a =
  \delta_{i, 0} \cdot m$, then we have a short exact sequence
\[ 0 \to C_{a, 0, a + \delta_{i, r - 1} \cdot m + 1}'^{(i)} \to
H_{X_v^{(i)}} \to B_{a, b, 0}'^{(i)} \to 0,
\] hence the claim follows from Lemma~\ref{lemma simple1}. If $b - a >
\delta_{i, 0} \cdot m$, then we have exact sequences
\begin{gather*} H_{X_{(a + 1, b)}^{(i)}} \to H_{X_v^{(i)}} \to
H_{X_v'^{(i)}} / \Im H_{f_{(a, b), (a + 1, b)}'^{(i)}} \to 0 \\
\intertext{and}  0 \to C_{a, 0, a + \delta_{i, r - 1} \cdot m +
1}'^{(i)} \to H_{X_v^{(i)}} / \Im H_{f_{(a, b), (a + 1, b)}'^{(i)}}
\to B_{a, b, 0}'^{(i)} \to 0,
\end{gather*} and the claim follows by induction and Lemma~\ref{lemma
simple1}.
\end{proof}

Next we show that the remaining representable functors corresponding
to the indecomposable objects in $\calD^b (\proj \Lambda)$ are not of
finite length in $\calC / \calC_0$.

\begin{lemma} \label{lemma nonsimple1} If $i \in [0, r - 1]$ and $v
\in I_i$, then $H_{Z_v^{(i)}} \not \in \calC_1$.
\end{lemma}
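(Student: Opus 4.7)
The plan is to construct an infinite ascending chain of finitely generated subfunctors of $H_{Z_v^{(i)}}$ which remains strict after passage to $\calC / \calC_0$; by Lemma~\ref{le:KG}, this forces $H_{Z_v^{(i)}}$ to have infinite length in $\calC / \calC_0$, and hence $H_{Z_v^{(i)}} \notin \calC_1$.

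Writing $v = (a, b)$, for every integer $\ell$ with $\ell \leq a + \delta_{i, r-1} m$ the degree-one arrow $h_{v, (\ell, a)}'^{(i)} \colon Z_v^{(i)} \to X_{(\ell, a)}^{(i+1)}$ is defined. I set $K_\ell := \Im H_{h_{v, (\ell, a)}'^{(i)}} \subseteq H_{Z_v^{(i)}}$. The relation $h_{v, (\ell, a)}'^{(i)} = f_{(\ell-1, a), (\ell, a)}'^{(i+1)} \circ h_{v, (\ell-1, a)}'^{(i)}$ in $k \Gamma / \calR$ shows $K_\ell \subseteq K_{\ell-1}$, so the $K_\ell$ form an ascending chain as $\ell$ decreases to $-\infty$.

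The key step is to verify that $K_{\ell-1} / K_\ell \notin \calC_0$ for all sufficiently negative $\ell$. I would do this by evaluating the quotient on the family of indecomposables $X_{(\ell-1, d)}^{(i+1)}$, $d \geq a$. Using the relations, $K_{\ell-1}(X_{(\ell-1, d)}^{(i+1)})$ is the one-dimensional space spanned by $f_{(\ell-1, a), (\ell-1, d)}'^{(i+1)} \circ h_{v, (\ell-1, a)}'^{(i)} = h_{v, (\ell-1, d)}'^{(i)}$, which is non-zero. On the other hand $K_\ell(X_{(\ell-1, d)}^{(i+1)}) = 0$, because $\Hom_{k \Gamma / \calR}(X_{(\ell, a)}^{(i+1)}, X_{(\ell-1, d)}^{(i+1)}) = 0$: no degree-zero arrow from $(\ell, a)$ to $(\ell-1, d)$ is available in the $(i+1)$-st component (it would require $\ell - 1 \geq \ell$), and any higher-degree path leaves the $X$-vertices of that component. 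Thus $K_{\ell-1} / K_\ell$ is non-zero at each of the infinitely many $X_{(\ell-1, d)}^{(i+1)}$ with $d \geq a$, which yields infinite total dimension, so $K_{\ell-1} / K_\ell \notin \calC_0$ as required.

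The main obstacle is the combinatorial bookkeeping in the path category $k \Gamma / \calR$ from Section~\ref{section_category}: verifying the vanishing of the relevant $\Hom$-spaces between $X$-vertices in the same shift component, confirming the identification of the compositions via the relations $\calR$, and checking that the Kronecker-delta boundary terms appearing in $\calZ_v'^{(i)}$, $\calI_{(\ell-1, a)}'^{(i+1)}$ and $I_{i+1}'$ only impose bounds that become vacuous as $\ell \to -\infty$.
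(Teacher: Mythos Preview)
Your argument is correct and produces a valid proof, but it follows a different route from the paper's.

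The paper works with the descending chain $\Im H_{f_{(a,b),(a+n,b)}^{(i)}}$, $n\in\bbN$, coming from the degree-zero maps $Z_v^{(i)}\to Z_{(a+n,b)}^{(i)}$, and identifies each successive quotient with the object $C_{a+n,b,a+\delta_{i,r-1}m+1}'^{(i)}$, which is simple in $\calC/\calC_0$ by Lemma~\ref{lemma simple1}. Thus the paper leverages the classification of simple objects in $\calC/\calC_0$ already established, obtaining the non-finite-length statement essentially for free.

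Your approach instead builds an ascending chain $K_\ell=\Im H_{h_{v,(\ell,a)}'^{(i)}}$ using the degree-one maps $Z_v^{(i)}\to X_{(\ell,a)}^{(i+1)}$ and shows $K_{\ell-1}/K_\ell\notin\calC_0$ by a direct dimension count on the family $X_{(\ell-1,d)}^{(i+1)}$, $d\ge a$. This avoids invoking Lemma~\ref{lemma simple1} at the price of an explicit computation in $k\Gamma/\calR$. One small caveat: the assertion that ``any higher-degree path leaves the $X$-vertices of that component'' is not literally true when $r=1$, since then $e'^{(0)}$ maps $X^{(0)}$ to $X^{(0)}$; however, the composite $e'^{(0)}\circ h'^{(0)}$ has degree $3$ and there is no degree-$3$ arrow $Z^{(0)}\to X^{(0)}$, so it vanishes and your conclusion $K_\ell(X_{(\ell-1,d)}^{(i+1)})=0$ still holds. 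Alternatively, restricting to $d>a+m$ disposes of this edge case outright, and still leaves infinitely many test objects.
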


\begin{proof} Let $v = (a, b)$. For each $n \in \bbN$ we have the
following exact sequence
\[ 0 \to \Im H_{f_{(a, b), (a + n + 1, b)}^{(i)}} \to \Im H_{f_{(a,
b), (a + n, b)}^{(i)}} \to C_{a + n, b, a + \delta_{i, r - 1} \cdot m
+ 1}'^{(i)} \to 0,
\] which implies the claim (note that $\Im H_{f_{(a, b), (a,
b)}^{(i)}} = H_{Z_v^{(i)}}$).
\end{proof}

Finally, we show that $\calC_2 = \calC$. For this we only need to
prove the following.

\begin{lemma} If $i \in [0, r - 1]$ and $v \in I_i$, then
$H_{Z_v^{(i)}} \in \calC_2$.
\end{lemma}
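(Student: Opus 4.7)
The plan is to prove that $H_{Z_v^{(i)}}$ is simple in $\calC/\calC_1$; combined with Lemma~\ref{lemma nonsimple1} this places it in $\calC_2$. By the sufficiency remark at the start of this section, it suffices to check, for each non-zero morphism $f \colon Z_v^{(i)} \to V$ with $V$ indecomposable, that either $\Im H_f \in \calC_1$ or $H_{Z_v^{(i)}}/\Im H_f \in \calC_1$. Every such $f$ is a scalar multiple of one of the arrows of $\Gamma$ emanating from $Z_v^{(i)}$, so there are four cases, depending on whether $f$ is $f_{v, u}^{(i)}$, $h_{v, u}'^{(i)}$, $h_{v, u}''^{(i)}$ or $e_{v, u}^{(i)}$. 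Writing $v = (a, b)$, the degree~$1$ cases are immediate: $\Im H_{h_{v, u}'^{(i)}} \subseteq H_{X_u^{(i+1)}}$ and $\Im H_{h_{v, u}''^{(i)}} \subseteq H_{Y_u^{(i+1)}}$ both lie in $\calC_1$ by Lemma~\ref{lemma finite1}. For the degree~$2$ arrow $e_{v, u}^{(i)}$ with $u = (u_1, u_2) \in \calZ_v^{(i)}$, the choice $w := (u_1, \max(a, u_1 - \delta_{i+1, 0} m))$ satisfies $w \in I_{i+1}' \cap \calZ_v'^{(i)}$ and $u \in \calX_w^{(i+1)}$, so the relations force $e_{v, u}^{(i)} = g_{w, u}'^{(i+1)} \circ h_{v, w}'^{(i)}$, giving $\Im H_{e_{v, u}^{(i)}} \subseteq H_{X_w^{(i+1)}} \in \calC_1$.

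The substantive case is $f = f_{v, u}^{(i)}$ with $u = v + (k, l)$ and $k + l \geq 1$, where the goal is $H_{Z_v^{(i)}}/\Im H_f \in \calC_1$. For $(k, l) = (1, 0)$ the convention making $h_{v, (a + \delta_{i, r-1} m + 1, a)}'^{(i)} = 0$ identifies $C_{a, b, a + \delta_{i, r-1} m + 1}'^{(i)}$ of Lemma~\ref{lemma simple1}(3) with $H_{Z_v^{(i)}}/\Im H_{f_{v, v + (1, 0)}^{(i)}}$, which is therefore simple in $\calC/\calC_0$; the case $(k, l) = (0, 1)$ is symmetric via Lemma~\ref{lemma simple1}(4) and $C_{a, b, b - \delta_{i, r-1} n + 1}''^{(i)}$. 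For $k + l \geq 2$, assume without loss of generality $k \geq 1$. The factorisation $f_{v, u}^{(i)} = f_{v + (1, 0), u}^{(i)} \circ f_{v, v + (1, 0)}^{(i)}$ produces the short exact sequence
\[
0 \to \Im H_{f_{v, v + (1, 0)}^{(i)}} / \Im H_{f_{v, u}^{(i)}} \to H_{Z_v^{(i)}} / \Im H_{f_{v, u}^{(i)}} \to H_{Z_v^{(i)}} / \Im H_{f_{v, v + (1, 0)}^{(i)}} \to 0,
\]
whose right-hand term already lies in $\calC_1$. A filtration of the left-hand term of length $(k - 1) + l$, moving first in the $a$-direction from $\Im H_{f_{v, v + (1, 0)}^{(i)}}$ down to $\Im H_{f_{v, v + (k, 0)}^{(i)}}$ and then in the $b$-direction down to $\Im H_{f_{v, v + (k, l)}^{(i)}}$, has successive quotients simple in $\calC/\calC_0$, whence the left-hand term is in $\calC_1$ and extension-closure of $\calC_1$ finishes the case.

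The step I expect to need the most care is the $b$-direction analogue of the exact sequence appearing in the proof of Lemma~\ref{lemma nonsimple1}, since only the $a$-direction version is recorded there. By a symmetric argument — using Lemma~\ref{lemma simple1}(4) in place of Lemma~\ref{lemma simple1}(3) — the cokernel $\Im H_{f_{w, w + (0, n)}^{(i)}}/\Im H_{f_{w, w + (0, n + 1)}^{(i)}}$ for $w = (w_1, w_2) \in I_i$ and $n \geq 0$ is a quotient of the simple $C_{w_1, w_2 + n, w_2 + n - \delta_{i, r-1} n + 1}''^{(i)}$, hence is zero or simple in $\calC/\calC_0$. The asymmetric role of $\delta_{i, 0} \cdot m$ and $\delta_{i, r-1} \cdot n$ in the definitions of $\calZ'^{(i)}$ and $\calZ''^{(i)}$ prevents a purely formal duality, so this computation must be carried out afresh.
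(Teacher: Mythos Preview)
Your argument is correct and follows the same four-case strategy as the paper, proving that $H_{Z_v^{(i)}}$ is simple in $\calC/\calC_1$; in particular, your Case~1 filtration is exactly the induction the paper runs, and your factorisation of $e_{v,u}^{(i)}$ through an $X$-vertex is precisely how the paper produces its epimorphism $H_{X_{(c,a)}^{(i+1)}}\twoheadrightarrow\Im H_f$ in Case~4. The one slip is notational: for $f\colon Z_v^{(i)}\to V$ the image $\Im H_f$ is a subobject of $H_{Z_v^{(i)}}$ and a \emph{quotient} of $H_V$, so your ``$\Im H_{h_{v,u}'^{(i)}}\subseteq H_{X_u^{(i+1)}}$'' should read ``$\Im H_{h_{v,u}'^{(i)}}$ is a quotient of $H_{X_u^{(i+1)}}$'' (and similarly for $h''$ and $e$); the conclusion is unaffected since $\calC_1$ is Serre, and you should also avoid reusing the letter $n$ as a running index in the final paragraph.
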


\begin{proof} Let $v = (a, b)$. We know from Lemma~\ref{lemma
nonsimple1} that $H_{Z_v^{(i)}}$ is a non-zero object in
$\calC_1$. In order to prove it is simple we fix a non-zero morphism
$f \colon Z_v^{(i)} \to V$. We may assume that we are in one of the
following cases:
\begin{enumerate}

\item $V = Z_{(c, d)}^{(i)}$ and $f = f_{(a, b), (c, d)}^{(i)}$ for
some $(c, d) \in \calI_{(a, b)}^{(i)}$,

\item $V = X_{(c, d)}^{(i + 1)}$ and $f = h_{(a, b), (c, d)}'^{(i)}$
for some $(c, d) \in \calZ_{(a, b)}'^{(i)}$,

\item $V = Y_{(c, d)}^{(i + 1)}$ and $f = h_{(a, b), (c, d)}''^{(i)}$
for some $(c, d) \in \calZ_{(a, b)}''^{(i)}$,

\item $V = Z_{(c, d)}^{(i + 1)}$ and $f = e_{(a, b), (c, d)}^{(i)}$
for some $(c, d) \in \calZ_{(a, b)}^{(i)}$.

\end{enumerate}

\textit{Case~$(1)$}. We prove by induction on $c + d$ that $\Im H_f
\underset{\calC / \calC_1}{=} H_{Z_v^{(i)}}$. If $c + d = a + b$
(i.e., $c = a$ and $d = b$), the claim is obvious. Assume $c > a$.
Then we have an exact sequence
\[ 0 \to \Im H_f \to \Im H_{f_{(a, b), (c - 1, d)}^{(i)}} \to C_{c -
1, d, a + \delta_{i, r - 1} \cdot m + 1}'^{(i)} \to 0,
\] hence $\Im H_f \underset{\calC / \calC_1}{=} \Im H_{f_{(a, b), (c - 1,
d)}^{(i)}}$ by Lemma~\ref{lemma simple1}. Moreover, we have by induction
$\Im H_{f_{(a, b), (c - 1, d)}^{(i)}} \underset{\calC / \calC_1}{=}
H_{Z_v^{(i)}}$. We proceed similarly if $d > b$.

\textit{Case~$(2)$}. We have an epimorphism $H_{X_{(c, d)}^{(i + 1)}}
\to \Im H_f$, hence $\Im H_f \underset{\calC / \calC_1}{=} 0$ by
Lemma~\ref{lemma finite1}.

\textit{Case~$(3)$}. Analogous to Case~(2).

\textit{Case~$(4)$}. Again, we have an epimorphism $H_{X_{(c, a)}^{(i +
1)}} \to \Im H_f$, hence we get $\Im H_f \underset{\calC /
\calC_1}{=} 0$ (in fact one may even prove that $\Im H_f \in \calC_0$
in this case).
\end{proof}

\section{The algebras of infinite global dimension}

Throughout this section fix $n \in \bbN_+$ and $m \in \bbN$. We put
$\Lambda := \Lambda (n, n, m)$. The aim of this section is to prove
that $\KGdim \calC = 1$, where $\calC := \Ab (\calD^b (\proj
\Lambda))$. The basic idea is to extract the $X$-part of the arguments
in the finite global dimension case. We explain this in more detail.

First we describe the category $\calD^b (\proj \Lambda)$. Let $\Gamma$
be the quiver with the vertices $X_v^{(i)}$ for $i \in [0, n - 1]$ and
$v \in I_i$, where
\[ I_i := \{ (a, b) \in \bbZ^2 \mid a \leq b + \delta_{i, 0} \cdot m
\}.\] For $i \in [0, n - 1]$ and $v = (a, b) \in I_i$ we define
\begin{align*} \calI_v^{(i)} &:= [a, b + \delta_{i, 0} \cdot m ]
\times [b, \infty) \\ \calX_v^{(i)} &:= (-\infty, a +
\delta_{i, n - 1} \cdot m] \times [a, b + \delta_{i, 0} \cdot m].
\end{align*} Then for each $i \in [0, n - 1]$, $v \in I_i^{(i)}$, and
$u \in \calI_v^{(i)}$, $u \neq v$, we have an arrow $f_{v, u}^{(i)} :
X_v^{(i)} \to X_u^{(i)}$ of degree $0$, and for each $i \in [0, n -
1]$, $v \in I_i^{(i)}$, and $u \in \calX_v^{(i)}$, we have an arrow
$e_{v, u}^{(i)} : X_v^{(i)} \to X_u^{(i + 1)}$ of degree $1$. Finally,
by $\calR$ we denote the set of the following relations. Let $f \colon
X \to X'$ and $g \colon X' \to X''$ be arrows of degree $p$ and $q$,
respectively. If there is an arrow $h \colon X \to X''$ of degree $p +
q$, then we have the relation $g f = h$, otherwise we have the
relation $g f = 0$ (an explicit list of relations can be found
in~\cite{Bobinski}*{\S5}).

\begin{proposition} There exists a $k$-linear equivalence $k \Gamma /
  \calR\xrightarrow{\sim}\calD^b (\proj \Lambda)$.
\end{proposition}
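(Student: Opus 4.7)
My plan is to proceed exactly as in the proof of Proposition~\ref{proposition category}, simplified by the fact that for $r=n$ the Auslander--Reiten quiver of $\calD^b(\proj\Lambda)$ has only components of type $\bbZ\bbA_\infty$ and none of type $\bbZ\bbA_\infty^\infty$. Concretely, by~\cite{BobinskiGeissSkowronski}, when $\Lambda=\Lambda(n,n,m)$ the AR quiver consists of precisely $n$ components of type $\bbZ\bbA_\infty$, permuted transitively into a single $\Sigma$-orbit; the vertices on the border of the $i$-th component will be identified with the objects $X_{(a,a+\delta_{i,0}\cdot m)}^{(i)}$ in $\Gamma_0$.

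First, I would define a candidate functor $F\colon k\Gamma/\calR \to \calD^b(\proj\Lambda)$ on objects by sending each $X_v^{(i)}$ to the indecomposable complex with the same label in the AR quiver, as dictated by the identification above. On morphisms, I would send $f_{v,u}^{(i)}$ to the unique (up to scalar) degree-zero irreducible-composition-chain morphism inside the $i$-th AR component, and $e_{v,u}^{(i)}$ to the unique graph-map type morphism crossing from the $i$-th component to the $(i+1)$-st (produced by $\Sigma$). The relations $\calR$ hold in $\calD^b(\proj\Lambda)$ essentially by construction, since composability of two arrows of degrees $p$ and $q$ in $\Gamma$ has been arranged to match the existence of a morphism of degree $p+q$ between the relevant indecomposables in $\calD^b(\proj\Lambda)$. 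So $F$ is well-defined.

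To show $F$ is an equivalence, I would show (i) $F$ is bijective on isomorphism classes of indecomposables, which is immediate from the identification of the AR quiver, and (ii) $F$ induces isomorphisms on all Hom-spaces between indecomposables. For (ii) I would proceed in two stages. The base case is the border objects of each $\bbZ\bbA_\infty$-component, where $H_{X_v^{(i)}}$ can be computed directly via the string-combinatorics of~\cites{Crawley-Boevey,Krause} (using~\cite{BrPaPl} as a guide to visualise the maps); one checks that the values $\dim_k\Hom(X_v^{(i)},X_u^{(j)})$ match the number of paths modulo $\calR$ predicted by the three classes $\calI_v^{(i)}$ and $\calX_v^{(i)}$ (and their compositions across several $\Sigma$-shifts). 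The inductive step propagates this through each component using Auslander--Reiten triangles: if $H_{X_v^{(i)}}$ agrees with the prediction for $v$ on the border, then the five-lemma applied to the long exact sequence coming from the AR triangle starting at $X_{v'}^{(i)}$ (for $v'$ one step into the interior) forces agreement at $v'$.

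The main obstacle, as in the proof of Proposition~\ref{proposition category}, will be (ii): carefully matching the combinatorially-predicted Hom-spaces with those computed by strings, especially keeping track of the cyclic shift of the upper index modulo $n$ and the boundary contribution $\delta_{i,0}\cdot m$ or $\delta_{i,n-1}\cdot m$ when an arrow crosses from component $i$ to component $i+1$. Once the border calculation and the inductive AR-triangle step are in place, the rest is formal, and the explicit list of relations recorded in~\cite{Bobinski}*{\S5} provides the final bookkeeping check.
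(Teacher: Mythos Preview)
Your proposal is correct and follows essentially the same route as the paper, which simply records the proof as ``Analogous to the proof of Proposition~\ref{proposition category}.'' You have spelled out precisely how that analogy works in the $r=n$ case---only $\bbZ\bbA_\infty$ components, string-combinatorics verification on the border, then propagation via Auslander--Reiten triangles---so there is nothing to add.
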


\begin{proof} Analogous to the proof of Proposition~\ref{proposition
category}.
\end{proof}

It is obvious that $\calC_0 \neq \calC$. In order to prove $\calC_1 =
\calC$, it suffices to show that $H_U \in \calC_1$ for each
indecomposable $U \in \calD^b (\proj \Lambda)$. The arguments are
similar to those used in Section~\ref{section calculations} and we
state the analogues of Lemmas~\ref{lemma simple0}, \ref{lemma simple1}
and~\ref{lemma finite1} without proofs. Again, we use the convention that
$f_{v, u}^{(i)}$ ($e_{v, u}^{(i)}$) denotes the zero morphism
$X_v^{(i)} \to 0$ if $i \in [0, n - 1]$, $v \in I_i$, and $u \not \in
I_i$ ($u \not \in I_{i + 1}$, respectively).

\begin{lemma} The simple objects in $\calC$ are
\[ A_v^{(i)} := H_{X_v^{(i)}} / \Im H_{(f_{v, v + (1, 0)}^{(i)}, f_{v,
v + (0, 1)}^{(i)})^{\tr}}
\] for $i \in [0, n - 1]$ and $v \in I_i$.\qed
\end{lemma}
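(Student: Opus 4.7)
The plan is to follow verbatim the argument used for part~(1) of Lemma~\ref{lemma simple0}, which in turn rests on the description of simple objects recalled in Section~\ref{section dimension}: every simple object in $\calC$ is of the form $H_X/\Im H_f$ for some Auslander--Reiten triangle $X\xrightarrow{f} Y\to Z\to\Sigma X$ in $\calD^b(\proj\Lambda)$, and this assignment yields a bijection between isomorphism classes of simples and Auslander--Reiten triangles. Thus it suffices to identify, for each indecomposable $X_v^{(i)}$, the starting morphism of its Auslander--Reiten triangle.

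For this I would invoke the description of the Auslander--Reiten quiver of $\calD^b(\proj\Lambda(n,n,m))$ from~\cite{BobinskiGeissSkowronski}: it consists of $n$ components of type $\bbZ\bbA_\infty$, indexed by $i \in [0, n - 1]$. Under the equivalence $k\Gamma/\calR \simeq \calD^b(\proj\Lambda)$ of the preceding proposition, the $i$-th component has vertex set $\{X_v^{(i)} : v \in I_i\}$, and for an interior vertex $v$ the two irreducible maps out of $X_v^{(i)}$ are precisely the degree-zero arrows $f_{v, v + (1, 0)}^{(i)}$ and $f_{v, v + (0, 1)}^{(i)}$. Consequently, the Auslander--Reiten triangle starting at $X_v^{(i)}$ has middle term $X_{v + (1, 0)}^{(i)} \oplus X_{v + (0, 1)}^{(i)}$ with starting map $(f_{v, v + (1, 0)}^{(i)}, f_{v, v + (0, 1)}^{(i)})^{\tr}$, which yields the claimed formula for $A_v^{(i)}$.

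For boundary vertices, i.e.\ $v = (a, b)$ with $a = b + \delta_{i, 0}\cdot m$ so that $v + (1, 0)\notin I_i$, only the arrow $f_{v, v + (0, 1)}^{(i)}$ survives, and the convention recalled just before the statement reinterprets $f_{v, v + (1, 0)}^{(i)}$ as the zero morphism $X_v^{(i)}\to 0$; the displayed quotient then reduces to $H_{X_v^{(i)}}/\Im H_{f_{v, v + (0, 1)}^{(i)}}$, matching the truncated Auslander--Reiten triangle on the border. I do not anticipate any real obstacle: the argument is a direct transcription of part~(1) of Lemma~\ref{lemma simple0}, and all the combinatorics of the Auslander--Reiten quiver are already packaged in the equivalence $k\Gamma/\calR\simeq\calD^b(\proj\Lambda)$ established in the preceding proposition, with the absence of $Y$- and $Z$-type components in the $r = n$ situation simplifying matters compared to Section~\ref{section calculations}.
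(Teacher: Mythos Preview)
Your proposal is correct and follows exactly the approach indicated by the paper: the lemma is stated there without proof, with the remark that it is the direct analogue of Lemma~\ref{lemma simple0}, whose proof in turn is just an appeal to the Auslander--Reiten triangle description of simples from Section~\ref{section dimension}. Your identification of the irreducible maps via the quiver $\Gamma$ and your treatment of the boundary vertices using the zero-morphism convention are precisely what is needed to unpack that reference.
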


\begin{lemma} The objects
\[ B_{a, b, b'}^{(i)} := H_{X_{(a, b)}^{(i)}} / \Im H_{(f_{(a, b), (a
+ 1, b)}^{(i)}, e_{(a, b), (b', a)}^{(i)})^{\tr}}
\] for $i \in [0, n - 1]$, $(a, b) \in I_i$, and $b' \in (-\infty, a +
\delta_{i, n - 1} \cdot m]$, are simple in $\calC / \calC_0$.\qed
\end{lemma}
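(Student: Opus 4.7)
The plan is to mirror the proof of the first part of Lemma~\ref{lemma simple1}, adjusting for the fact that the present quiver $\Gamma$ has only $X$-vertices and that the arrows $e$ are now of degree $1$ rather than degree $2$. Consequently, the case analysis for a morphism out of $X_{(a,b)}^{(i)}$ collapses from three cases to two, and the proof is overall shorter.

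To see $B_{a,b,b'}^{(i)} \underset{\calC / \calC_0}{\neq} 0$ I would exhibit, for each $s \in \bbN$, the short exact sequence
\[
0 \to B_{a, b + s + 1, b'}^{(i)} \to B_{a, b + s, b'}^{(i)} \to A_{(a, b + s)}^{(i)} \to 0
\]
in $\calC$, exactly as in the finite global dimension case; this forces $B_{a,b,b'}^{(i)}$ to have infinite length modulo $\calC_0$. Next I would fix a nonzero morphism $f \colon X_{(a,b)}^{(i)} \to V$ with $V$ indecomposable, set
\[
B_f := \Im H_f / (\Im H_f \cap \Im H_{(f_{(a,b),(a+1,b)}^{(i)}, e_{(a,b),(b',a)}^{(i)})^{\tr}}),
\]
and show that $B_f$ is either zero or all of $B_{a,b,b'}^{(i)}$ in $\calC/\calC_0$. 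Up to scalar, $f$ is either (a) $f_{(a,b),(c,d)}^{(i)}$ with $V = X_{(c,d)}^{(i)}$ and $(c,d) \in \calI_{(a,b)}^{(i)}$, or (b) $e_{(a,b),(c,d)}^{(i)}$ with $V = X_{(c,d)}^{(i+1)}$ and $(c,d) \in \calX_{(a,b)}^{(i)}$. Case~(a) is a verbatim repeat of Case~(1) of Lemma~\ref{lemma simple1}: for $c > a$ one factors through $f_{(a,b),(a+1,b)}^{(i)}$ to get $B_f = 0$; for $c = a$ one inducts on $d - b$ using short exact sequences whose quotients are the simple objects $A_{(a,d-1)}^{(i)}$.

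Case~(b) is the genuinely new case, and the goal is $B_f \underset{\calC/\calC_0}{=} 0$. First I reduce to $d = a$: if $d > a$ then $(c,d) \in \calX_{(a+1,b)}^{(i)}$, so the composition rule in $\calR$ reads $e_{(a+1,b),(c,d)}^{(i)} \circ f_{(a,b),(a+1,b)}^{(i)} = e_{(a,b),(c,d)}^{(i)}$, providing a factorization through the first generator $f_{(a,b),(a+1,b)}^{(i)}$. With $d = a$, if $c \geq b'$ then $(c,a) \in \calI_{(b',a)}^{(i+1)}$ and the analogous composition $f_{(b',a),(c,a)}^{(i+1)} \circ e_{(a,b),(b',a)}^{(i)} = e_{(a,b),(c,a)}^{(i)}$ provides a factorization through the second generator $e_{(a,b),(b',a)}^{(i)}$; in either sub-subcase $B_f$ is already zero in $\calC$. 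The remaining range $c < b'$ is then treated by descending induction on $c$ via a short exact sequence of the form
\[
0 \to B_{e_{(a,b),(c+1,a)}^{(i)}} \to B_{e_{(a,b),(c,a)}^{(i)}} \to A_{(c,a)}^{(i+1)} \to 0,
\]
whose right-hand term is a simple object of $\calC$ and hence lies in $\calC_0$. The main obstacle is the verification of the two factorization identities just used: one has to check that the inequalities defining $\calI_{(b',a)}^{(i+1)}$ and $\calX_{(a+1,b)}^{(i)}$ do guarantee the existence of the needed arrows, and that the prescribed compositions really coincide with the corresponding degree-$1$ generators in $\calR$. Beyond this index bookkeeping the argument is a routine adaptation of the finite global dimension case.
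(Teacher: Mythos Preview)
Your proposal is correct and follows precisely the approach the paper indicates, namely adapting the proof of Lemma~\ref{lemma simple1} to the simpler quiver with only $X$-vertices. Your Case~(a) is the verbatim analogue of Case~(1) there, and your Case~(b) plays the role of Case~(2), with the second generator $e_{(a,b),(b',a)}^{(i)}$ replacing $g_{(a,b),(a,b')}'^{(i)}$ and the coordinates swapped accordingly; the factorization checks and the induction (on $b'-c$) are exactly the intended routine bookkeeping.
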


\begin{lemma} If $i \in [0, n - 1]$ and $v \in I_i$, then
$H_{X_v^{(i)}} \in \calC_1$.\qed
\end{lemma}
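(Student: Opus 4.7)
My plan is to imitate the inductive argument in the proof of Lemma~\ref{lemma finite1}, adapted to the present simpler setting where only $X$-type vertices appear.

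I would induct on $b - a$, where $v = (a, b) \in I_i$. In the inductive step, when $(a + 1, b) \in I_i$, I use the right exact sequence
\[
H_{X_{(a+1, b)}^{(i)}} \to H_{X_v^{(i)}} \to H_{X_v^{(i)}} / \Im H_{f_{(a, b), (a+1, b)}^{(i)}} \to 0,
\]
so that by the inductive hypothesis the image of the first map lies in $\calC_1$, and it suffices to prove $H_{X_v^{(i)}} / \Im H_{f_{(a, b), (a+1, b)}^{(i)}} \in \calC_1$. In the base case, where $(a + 1, b) \notin I_i$, the morphism $f_{(a, b), (a+1, b)}^{(i)}$ is zero by our convention and this quotient coincides with $H_{X_v^{(i)}}$ itself.

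In either case, I would then construct a short exact sequence
\[
0 \to K \to H_{X_v^{(i)}} / \Im H_{f_{(a, b), (a+1, b)}^{(i)}} \to B_{a, b, b_0'}^{(i)} \to 0,
\]
with $b_0' := a + \delta_{i, n-1} \cdot m$. The cokernel is simple in $\calC / \calC_0$ by the preceding lemma. The kernel $K$ arises, via the isomorphism theorems, as a quotient of $\Im H_{e_{(a, b), (b_0', a)}^{(i)}}$, a subfunctor of $H_{X_{(b_0', a)}^{(i+1)}}$, modulo the intersection with $\Im H_{f_{(a, b), (a+1, b)}^{(i)}}$.

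The main obstacle is then to prove that $K \in \calC_1$. In Lemma~\ref{lemma finite1} this was essentially free: the analogous kernel was the $C$-simple $C_{a, 0, a + \delta_{i, r-1} \cdot m + 1}'^{(i)}$ attached to a $Z$-vertex, which is simple in $\calC / \calC_0$ by Lemma~\ref{lemma simple1}. Since the algebra $\Lambda(n, n, m)$ has no $Z$-vertices, this role must be played here by an object attached to the $X$-vertex $(b_0', a)$ of the adjacent component $I_{i+1}$; note that $(b_0', a)$ lies on the boundary of $I_{i+1}$ (one checks directly that $b_0' = a + \delta_{i+1, 0} \cdot m$). Using the relations in $k\Gamma / \calR$ and the conventions on vanishing morphisms, I expect to identify $K$ with the degenerate $B$-simple $H_{X_{(b_0', a)}^{(i+1)}} / \Im H_{f_{(b_0', a), (b_0' + 1, a)}^{(i+1)}}$, and to verify by an argument parallel to that of the preceding lemma that this quotient is simple in $\calC / \calC_0$. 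Once this identification is in hand, the two short exact sequences above finish the induction.
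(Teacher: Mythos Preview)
Your inductive skeleton is exactly the intended analogue of Lemma~\ref{lemma finite1}, and the paper states the result without proof for precisely this reason. The one substantive slip is your identification of the kernel $K$.

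First a minor point: $\Im H_e$ (with $e=e_{(a,b),(b_0',a)}^{(i)}$) is a \emph{quotient} of $H_{X_{(b_0',a)}^{(i+1)}}$, not a subfunctor; your phrasing suggests the latter. More importantly, $K$ is \emph{not} isomorphic to $H_{X_{(b_0',a)}^{(i+1)}}$ (your ``degenerate $B$-simple'' collapses to this since $f_{(b_0',a),(b_0'+1,a)}^{(i+1)}=0$). In fact the situation is simpler than you anticipate: one checks directly from the relations that for any morphism $g$ out of $X_{(b_0',a)}^{(i+1)}$, the composite $g\circ e$ vanishes unless $g=f_{(b_0',a),(b_0',d)}^{(i+1)}$ with $d\in[a,\,b+\delta_{i,0}\cdot m]$ (degree-$2$ composites are zero, and for larger $d$ the target lies outside $\calX_{(a,b)}^{(i)}$). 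Hence $\Im H_e$ is supported on finitely many indecomposables, so $\Im H_e\in\calC_0$ and therefore $K\in\calC_0$. This replaces the $C'$-simple from Lemma~\ref{lemma finite1} and closes the induction immediately, without needing to establish any new simplicity statement in $\calC/\calC_0$.

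Your proposed route---showing $H_{X_{(b_0',a)}^{(i+1)}}$ is simple in $\calC/\calC_0$---would also work, since $K$ is a quotient of it; but you would still need to observe that $K$ is a quotient rather than equal, and the direct verification $\Im H_e\in\calC_0$ is shorter.
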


\section{Concluding remarks}

There are two other important triangulated categories, which one often
studies for a finite dimensional algebra $\Lambda$: the bounded
derived category $\calD^b (\mod \Lambda)$ and the stable category
$\umod \hat{\Lambda}$ of the repetitive algebra $\hat{\Lambda}$.  Thus
one may also ask about the Krull--Gabriel dimensions of the
abelianisations of these two categories. We have the following result.

\begin{theorem}
Let $\Lambda$ be a finite dimensional $k$-algebra and denote
by $\calC$ either $\Ab (\calD^b (\mod \Lambda))$ or  $\Ab (\umod
\hat{\Lambda})$. Then $\KGdim\calC\neq 1$, and $\KGdim\calC=0$ if and
only if $\Lambda$ is piecewise hereditary of Dynkin type.
\end{theorem}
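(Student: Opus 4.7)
The plan is to reduce both assertions, wherever possible, to the Main Theorem for $\calD^b(\proj \Lambda)$ via Happel's embedding theorem: the canonical fully faithful exact functor $\calD^b(\mod \Lambda) \hookrightarrow \umod \hat{\Lambda}$ is an equivalence precisely when $\gldim \Lambda < \infty$. In particular, if $\Lambda$ is piecewise hereditary of Dynkin type, then $\gldim \Lambda < \infty$ forces $\calD^b(\mod \Lambda) = \calD^b(\proj \Lambda)$, and Happel identifies $\umod \hat{\Lambda}$ with the same category, so Part~(1) of the Main Theorem gives $\KGdim \calC = 0$. Both the converse of the characterisation and the inequality $\KGdim \calC \neq 1$ then follow once we establish that $\KGdim \calC \geq 2$ whenever $\Lambda$ is not piecewise hereditary of Dynkin type.

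This lower bound splits into three sub-cases. If $\Lambda$ is not derived discrete, Bautista's theorem produces a generic object $X \in \calD(\Mod \Lambda)$ with $\End(X)/\rad \End(X)$ containing an element transcendental over $k$. The Herzog--Krause argument underlying the $\calD^b(\proj \Lambda)$ version in Section~\ref{section generic} transfers with no essential change to both $\calD^b(\mod \Lambda)$ and $\umod \hat{\Lambda}$, since the generic object remains cohomological on either category, yielding $\KGdim \calC \geq 2$. If $\Lambda$ is derived discrete but not piecewise hereditary of Dynkin type and $\gldim \Lambda < \infty$, then again $\calD^b(\mod \Lambda) = \calD^b(\proj \Lambda)$ and Happel gives $\umod \hat{\Lambda}$ equivalent to it, so Part~(2) of the Main Theorem yields $\KGdim \calC = 2$ directly.

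The remaining case, namely $\Lambda$ derived discrete of infinite global dimension (so derived equivalent to some $\Lambda(n,n,m)$), is the main obstacle. Here $\calD^b(\mod \Lambda) \supsetneq \calD^b(\proj \Lambda)$, and the Main Theorem only provides $\KGdim \Ab(\calD^b(\proj \Lambda)) = 1$, which is insufficient. The plan is to extend the quiver-with-relations description of Section~\ref{section_category} to the enlarged categories: in $\calD^b(\mod \Lambda)$ one must incorporate non-perfect complexes, notably shifts of simple modules and their truncated projective resolutions, while in $\umod \hat{\Lambda}$ one must incorporate representatives from the repetitive algebra outside the image of Happel's functor. Once this combinatorial data is in hand, adapting the filtration arguments of Section~\ref{section calculations} should produce an object analogous to $Z_v^{(i)}$ whose representable functor remains of infinite length modulo $\calC_1$, and hence $\KGdim \calC \geq 2$. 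The hardest part is setting up this morphism calculus in the spirit of~\cite{BrPaPl} and verifying that the new indecomposables genuinely force the Krull--Gabriel dimension above $1$; the filtration bookkeeping itself should then run parallel to the computations already carried out for perfect complexes.
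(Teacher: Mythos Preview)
Your overall strategy is sound and arrives at the same case decomposition as the paper, but you miss two simplifications that make the argument considerably shorter.

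First, for $\Lambda$ not derived discrete, the paper does not redo Herzog's argument for the larger categories. Instead it proves an elementary monotonicity lemma: if $\calS$ is a thick subcategory of a triangulated category $\calT$, then $\KGdim\Ab(\calS)\le\KGdim\Ab(\calT)$, via the exact embedding $\Ab(\calS)\to\Ab(\calT)$ coming from the universal property of the abelianisation. Applied to the chain $\calD^b(\proj\Lambda)\hookrightarrow\calD^b(\mod\Lambda)\hookrightarrow\umod\hat\Lambda$, this gives $\KGdim\calC\ge\KGdim\Ab(\calD^b(\proj\Lambda))\ge 2$ directly from Part~(3) of the Main Theorem. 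Your route of transporting the generic-object argument to $\umod\hat\Lambda$ is shakier, since it is not clear in what sense a generic object of $\calD(\Mod\Lambda)$ ``remains cohomological on'' $\umod\hat\Lambda$; the monotonicity lemma sidesteps this entirely.

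Second, in the derived-discrete infinite-global-dimension case you anticipate having to develop new combinatorics for $\umod\hat\Lambda$ and $\calD^b(\mod\Lambda)$. The paper's key observation is that this work is already done: for $\Lambda=\Lambda(n,n,m)$ the category $\umod\hat\Lambda$ has \emph{precisely} the quiver-with-relations description of Section~\ref{section_category} (the one with $X$-, $Y$- and $Z$-vertices, written there for the finite-global-dimension case $r<n$), so the computations of Section~\ref{section calculations} apply verbatim and yield $\KGdim\Ab(\umod\hat\Lambda)=2$. For $\calD^b(\mod\Lambda)$ one need only observe that some of the $Z$-objects from Section~\ref{section_category} already lie in it, and Lemma~\ref{lemma nonsimple1} shows that the corresponding representable functors are not in $\calC_1$; hence $\KGdim\ge 2$. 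No new morphism calculus is required.

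Finally, a small slip: to conclude $\KGdim\calC\ge 2$ you want the representable functor to have infinite length modulo $\calC_0$ (equivalently, to lie outside $\calC_1$), not modulo $\calC_1$ as you wrote.
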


\begin{proof}
  We apply the Main Theorem and use the chain of fully faithful exact
  functors
\[ \calD^b (\proj \Lambda)\to \calD^b (\mod \Lambda)\to \umod \hat{\Lambda}.\]

 If $\Lambda$ is piecewise hereditary of
  Dynkin type, then $\Lambda$ is derived discrete and $\calD^b (\proj
  \Lambda) = \calD^b (\mod \Lambda) = \umod \hat{\Lambda}$. Thus
  $\KGdim\calC=0$.

  If $\Lambda$ is not derived discrete, then Lemma~\ref{lemma inequality} below yields
  \[ 2 \leq \KGdim \Ab (\calD^b (\proj \Lambda)) \leq \KGdim \Ab
  (\calD^b (\mod \Lambda)) \leq \KGdim \Ab (\umod \hat{\Lambda}).\]

Finally, assume $\Lambda$ is derived discrete, but not piecewise
hereditary. If $\gldim \Lambda < \infty$, then again $\calD^b (\proj
\Lambda) = \calD^b (\mod \Lambda) = \umod \hat{\Lambda}$. Thus assume
$\gldim \Lambda = \infty$. In this case the description of $\umod
\hat{\Lambda}$ is the same as the description of $\calD^b (\proj
\Lambda)$ given in Section~\ref{section_category}, hence the arguments
from Section~\ref{section calculations} apply. On the other hand,
$\calD^b (\mod \Lambda)$ lies strictly between $\calD^b (\proj
\Lambda)$ and $\umod \hat{\Lambda}$, but some of the $Z$-modules from
Section~\ref{section_category} survive
(see~\cite{BobinskiGeissSkowronski} for details) and the corresponding
representable functors are not of finite length in $\calC / \calC_0$
when $\calC = \Ab (\calD^b (\umod \hat{\Lambda}))$.
\end{proof}

In the above proof the following lemma is used.

\begin{lemma} \label{lemma inequality}
Let $\calS$ be a thick subcategory of a triangulated category $\calT$. Then $\KGdim \Ab (\calS) \leq \KGdim \Ab (\calT)$.
\end{lemma}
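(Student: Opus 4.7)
The plan is to lift the inclusion $i\colon\calS\hookrightarrow\calT$ to an exact, fully faithful functor $i_*\colon\Ab(\calS)\to\Ab(\calT)$, and then to show by induction on $n$ that $i_*$ reflects the Krull--Gabriel filtration: namely, that $i_*F\in(\Ab(\calT))_n$ implies $F\in(\Ab(\calS))_n$ for every $F\in\Ab(\calS)$. Taking $n=\KGdim\Ab(\calT)$ will then immediately give $\KGdim\Ab(\calS)\leq n$.

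First I would construct $i_*$ via the universal property of the abelianisation. Since $i$ is exact, the composite $\iota_\calT\circ i\colon\calS\to\Ab(\calT)$ is cohomological, hence factors uniquely through an exact functor $i_*\colon\Ab(\calS)\to\Ab(\calT)$ sending the representable $H_X^\calS$ to $H_X^\calT$ for each $X\in\calS$. Full faithfulness on representables is the Yoneda computation
\[
\Hom_{\Ab(\calS)}(H_Y^\calS,H_X^\calS)=\Hom_\calS(X,Y)=\Hom_\calT(X,Y)=\Hom_{\Ab(\calT)}(H_Y^\calT,H_X^\calT),
\]
using full faithfulness of $i$ in the middle equality. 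The extension to arbitrary finitely presented functors follows from the observation that $(i_*G)(X)=G(X)$ for $X\in\calS$ and $G\in\Ab(\calS)$ (directly from the formula $i_*G=\coker(H_{Y'}^\calT\to H_Y^\calT)$ applied to a presentation of $G$), together with a short diagram chase on finite presentations.

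The heart of the proof is the following inductive claim: for every $n\geq-1$ and every $F\in\Ab(\calS)$, if $i_*F\in(\Ab(\calT))_n$ then $F\in(\Ab(\calS))_n$. The base case $n=0$ is immediate, since a fully faithful exact functor reflects finite length (subobjects of $F$ in $\Ab(\calS)$ embed as subobjects of $i_*F$ in $\Ab(\calT)$). For the inductive step, assume the claim at level $n-1$ and suppose for contradiction that $F\notin(\Ab(\calS))_n$. Then $F$ is not of finite length in $\Ab(\calS)/(\Ab(\calS))_{n-1}$, so there is an infinite strict chain $F_0\subsetneq F_1\subsetneq\cdots$ of subobjects of $F$ in $\Ab(\calS)$ with $F_{j+1}/F_j\notin(\Ab(\calS))_{n-1}$ for each $j$. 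By the contrapositive of the inductive hypothesis applied to each quotient, $i_*(F_{j+1}/F_j)\notin(\Ab(\calT))_{n-1}$. Since $i_*$ is exact and faithful, the chain $i_*F_0\subsetneq i_*F_1\subsetneq\cdots$ in $\Ab(\calT)$ has consecutive quotients $i_*F_{j+1}/i_*F_j=i_*(F_{j+1}/F_j)\notin(\Ab(\calT))_{n-1}$, contradicting $i_*F\in(\Ab(\calT))_n$.

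The main obstacle is bookkeeping in the inductive step: one must use the characterisation of $\calC_n$ as those objects admitting no infinite strict chain of subobjects whose consecutive quotients lie outside $\calC_{n-1}$, and verify that exactness together with full faithfulness of $i_*$ preserves both the strictness of such a chain and the non-membership of its quotients in $(\Ab(\calT))_{n-1}$. Once these verifications are in place, the argument is essentially formal, and the lemma follows by applying the claim with $n=\KGdim\Ab(\calT)$.
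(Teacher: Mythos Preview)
Your proposal is correct and follows essentially the same route as the paper: construct the exact functor $\Ab(\calS)\to\Ab(\calT)$ via the universal property of the abelianisation, observe that it is a full embedding, and then prove by induction on $n$ that $\Ab(\calT)_n\cap\Ab(\calS)\subseteq\Ab(\calS)_n$. The paper compresses all of this into two sentences; you have simply unpacked the ``easy induction'' (and in doing so also noted correctly that thickness plays no role beyond $\calS$ being a full triangulated subcategory).
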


\begin{proof}
  The universal property of the abelianisation yields an exact
  embedding $\Ab (\calS) \to \Ab (\calT)$. An easy
  induction shows that $\Ab (\calT)_n \cap \Ab (\calS) \subseteq \Ab
  (\calS)_n$ for each $n \in \bbN$.
\end{proof}

We observe that
\[ \KGdim \Ab (\calD^b (\mod \Lambda)) = \KGdim \Ab (\umod
\hat{\Lambda})
\] if $\Lambda$ is derived discrete. We do not know whether this
equality holds for an arbitray finite dimensional $k$-algebra
$\Lambda$.

\bibsection

\begin{biblist}

\bib{Auslander}{article}{
   author={Auslander, M.},
   title={Representation theory of Artin algebras. II},
   journal={Comm. Algebra},
   volume={1},
   date={1974},
   pages={269--310},
}
		
\bib{Bautista}{article}{
   author={Bautista, R.},
   title={Derived endo-discrete Artin algebras},
   journal={Colloq. Math.},
   volume={105},
   date={2006},
   pages={297--310},
}
		
\bib{Bobinski}{article}{
   author={Bobi{\'n}ski, G.},
   title={The graded centers of derived discrete algebras},
   journal={J. Algebra},
   volume={333},
   date={2011},
   pages={55--66},
}
		
\bib{BobinskiGeissSkowronski}{article}{
   author={Bobi{\'n}ski, G.},
   author={Gei{\ss}, Ch.},
   author={Skowro{\'n}ski, A.},
   title={Classification of discrete derived categories},
   journal={Cent. Eur. J. Math.},
   volume={2},
   date={2004},
   pages={19--49},
}

\bib{BrPaPl}{article}{
   author={Broomhead, N.},
   author={Pauksztello, D.},
   author={Ploog, D.},
   title={Discrete derived categories I: homomorphisms, autoequivalences and t-structures},
   eprint={arXiv:1312.5203},
}

\bib{Crawley-Boevey}{article}{
   author={Crawley-Boevey, W. W.},
   title={Maps between representations of zero-relation algebras},
   journal={J. Algebra},
   volume={126},
   date={1989},
   pages={259--263},
}

\bib{CB1991}{article}{
   author={Crawley-Boevey, W. W.},
   title={Tame algebras and generic modules},
   journal={Proc. London Math. Soc. (3)},
   volume={63},
   date={1991},
   pages={241--265},
 }

\bib{CB1992}{article}{
   author={Crawley-Boevey, W. W.},
   title={Modules of finite length over their endomorphism rings},
   conference={
      title={Representations of algebras and related topics},
      address={Kyoto},
      date={1990},
   },
   book={
      series={London Math. Soc. Lecture Note Ser.},
      volume={168},
      publisher={Cambridge Univ. Press},
      place={Cambridge},
   },
   date={1992},
   pages={127--184},
}
		
\bib{Fr1966}{article}{
   author={Freyd, P.},
   title={Stable homotopy},
   conference={
      title={Proc. Conf. Categorical Algebra},
      address={La Jolla, Calif.},
      date={1965},
   },
   book={
      publisher={Springer},
      place={New York},
   },
   date={1966},
   pages={121--172},
}

\bib{Gabriel}{article}{
   author={Gabriel, P.},
   title={Des cat\'egories ab\'eliennes},
   language={French},
   journal={Bull. Soc. Math. France},
   volume={90},
   date={1962},
   pages={323--448},
}
		
\bib{Geigle}{article}{
   author={Geigle, W.},
   title={The Krull-Gabriel dimension of the representation theory of a tame hereditary Artin algebra and applications to the structure of exact sequences},
   journal={Manuscripta Math.},
   volume={54},
   date={1985},
   pages={83--106},
}

\bib{GeKr2002}{article}{
   author={Geiss, Ch.},
   author={Krause, H.},
   title={On the notion of derived tameness},
   journal={J. Algebra Appl.},
   volume={1},
   date={2002},
   pages={133--157},
}
		
\bib{Happel}{book}{
   author={Happel, D.},
   title={Triangulated Categories in the Representation Theory of Finite-dimensional Algebras},
   series={London Math. Soc. Lecture Note Ser.},
   volume={119},
   publisher={Cambridge Univ. Press},
   place={Cambridge},
   date={1988},
   pages={x+208},
}

\bib{Herzog}{article}{
   author={Herzog, I.},
   title={The endomorphism ring of a localized coherent functor},
   journal={J. Algebra},
   volume={191},
   date={1997},
   pages={416--426},
}
		
\bib{Krause}{article}{
   author={Krause, H.},
   title={Maps between tree and band modules},
   journal={J. Algebra},
   volume={137},
   date={1991},
   pages={186--194},
}

\bib{Kr1998}{article}{
   author={Krause, H.},
   title={Generic modules over Artin algebras},
   journal={Proc. London Math. Soc. (3)},
   volume={76},
   date={1998},
   pages={276--306},
}
		
\bib{Ringel}{book}{
   author={Ringel, C. M.},
   title={Tame Algebras and Integral Quadratic Forms},
   series={Lecture Notes in Math.},
   volume={1099},
   publisher={Springer},
   place={Berlin},
   date={1984},
   pages={xiii+376},
}
		
\bib{Ve1996}{article}{
   author={Verdier, Jean-Louis},
   title={Des cat\'egories d\'eriv\'ees des cat\'egories ab\'eliennes},
   journal={Ast\'erisque},
   number={239},
   date={1996},
   pages={xii+253 pp.},
}

\bib{Vossieck}{article}{
   author={Vossieck, D.},
   title={The algebras with discrete derived category},
   journal={J. Algebra},
   volume={243},
   date={2001},
   pages={168--176},
}
		
\end{biblist}

\end{document}